\newcommand{\AJS}[1]{{\color{black}#1}}
\newcommand{\shlee}[1]{{\color{black}#1}}
\newcommand{\wbu}{\widetilde{\bu}}
\newcommand{\bdf}[2]{{\textup{\textsf{BDF}}_{#2}\left({#1}\right)}}
\newcommand{\gu}{{\textup{GU}}}
\newcommand{\rot}{{\textup{ROT}}}
\begin{document}

\begin{frontmatter}

\title{Stability analysis of pressure correction schemes for the Navier-Stokes equations with traction boundary conditions}

\author[SL]{Sanghyun Lee}
\ead{shlee@ices.utexas.edu}
\address[SL]{Center for Subsurface Modeling, Institute for Computational Engineering and Sciences (ICES), University of Texas at Austin}

\author[AJS]{Abner J.~Salgado}
\ead{asalgad1@utk.edu}
\address[AJS]{Department of Mathematics, University of Tennessee, Knoxville, TN 37996 USA.}




\begin{abstract}
We present a stability analysis for two different rotational pressure correction schemes with \AJS{open and} traction boundary conditions. First, we provide a stability analysis for a rotational version of the grad-div stabilized scheme of [\bibentry{SLee2013}]. This scheme turns out to be unconditionally stable, provided the stabilization parameter is suitably chosen. We also establish a conditional stability result for the boundary correction scheme presented in [\bibentry{MR3253466}]. These results are shown by employing the equivalence between stabilized gauge Uzawa methods and rotational pressure correction schemes with traction boundary conditions. 
\end{abstract}

\begin{keyword}
Navier Stokes \sep 
\AJS{Open and traction} boundary conditions \sep 
Fractional time stepping
\end{keyword}

\end{frontmatter}

\section{Introduction}
\label{sec:Intro}

The purpose of this work is to provide a stability analysis of splitting schemes for the incompressible Navier Stokes equations (NSE)
\begin{equation}
\label{eq:NSE}
  \begin{dcases}
    \ue_t + \DIV(\ue \otimes \ue) - \DIV \left( \frac2\Reyn \vare(\ue) \right) + \GRAD \pe = \bef, & \text{in } \Omega \times(0,T], \\
    \DIV \ue = 0, & \text{in } \Omega\times(0,T].
  \end{dcases}
\end{equation}
Here $\Omega$ is a bounded domain in $\Real^d$ with $d \in \{2,3\}$, 
$\bef:\Omega \times (0,T]\rightarrow \mathbb R^d$ is a given smooth source term, \AJS{$\Reyn>0$ is the so-called Reynolds number, and $T>0$ is the final time. The operator $\vare(\ue)$ is assumed to take one of the following forms:
\[
  \vare(\ue) := \frac12 \begin{dcases}
                  \GRAD \ue, & \text{open boundary conditions}, \\
                  \GRAD \ue + \GRAD \ue^\intercal, & \text{traction boundary conditions}.
                \end{dcases}
\]} 
The dependent variables are the velocity of the fluid
$\ue:\Omega \times [0,T]\rightarrow \mathbb R^d$, and the pressure 
$\pe:\Omega \times [0,T] \rightarrow \mathbb R$.
The system is supplemented with the initial condition
\begin{equation}
\label{eq:IC}
  \ue_{|t=0} = \ue_0
\end{equation}
and suitable boundary conditions discussed below.

Time discretization algorithms for the solution of \eqref{eq:NSE}--\eqref{eq:IC} can be classified as either fully coupled or splitting techniques. Fully coupled methods are usually complicated by the fact that the incompressibility constraint induces a saddle point structure. It is not our intention here to provide an overview of the literature, so we only refer the reader to \cite{MR3235759}. On the other hand, splitting techniques are advocated because they are somewhat easier to implement, since they circumvent the saddle point structure. The reader is referred to the overview \cite{MR2250931} for details. However, when studying the properties of such splitting techniques it is usually assumed that the boundary conditions are \emph{no-slip}, that is
\[
  \ue_{|\Gamma} = \boldsymbol{0},
\]
where $\Gamma := \partial\Omega$ denotes the boundary of $\Omega$. These are not the only possible boundary conditions for \eqref{eq:NSE}, nor the only ones that are physically relevant. \AJS{Other possibilities, and the ones we are interested in here, are the so-called \emph{open and traction boundary conditions}
\begin{equation}
\label{eq:BC}
  \left( \frac2\Reyn\vare(\ue) - p \polI \right)_{|\Gamma} \SCAL \bn = \bg,
\end{equation}
where $\bn$ is the unit outward normal to $\Gamma$ and \shlee{$\bg:\Gamma \times (0,T]\rightarrow \mathbb R^d$} is a given function. Open boundary conditions, that is the case where $\vare(\ue) = \tfrac12 \GRAD \ue$ and $\bg \equiv \boldsymbol{0}$, arise when dealing with outflow or artificial boundaries \cite{MR1423081,MR3327983}. On the other hand, for free surface flows \cite{MR1826211,FLD:FLD4071}, one usually prescribes traction boundary conditions, \ie $\vare(\ue) = \tfrac12 (\GRAD \ue + \GRAD \ue^\intercal)$ and $\bg$ is related to the surface tension}.

The development and analysis of splitting schemes for traction boundary conditions is rather scarce. To our knowledge, the first reference that provides a rigorous analysis of such methods is \cite{MR2177143}, where the authors show that these suffer from a drastic accuracy reduction. Several attempts to remedy this shortcoming can be found in the literature. For instance, \cite{MR3327983} proposes a modification of the boundary condition \eqref{eq:BC} aimed at remedying issues related to backflows and numerical instabilities that appear at open boundary conditions for high Reynolds numbers. In the course of their discussion, the authors were able to show unconditional stability of a formally first order class of schemes with modified boundary conditions. Other modifications of the boundary conditions \eqref{eq:BC} also exist in the literature \cite{MR3253466,MR2988624} which seem to work in practice, but lack \shlee{of} any analytical justification. \shlee{Some other works modify the splitting scheme itself and this allows them to recover optimal experimental orders of convergence, e.g \cite{MR2957735,MR2855967,Angot12,SLee2013}.} While many other approaches can be found in the literature, in this work we focus on two of them:
\begin{enumerate}[1.]
  \item Using a so-called grad-div stabilization \cite{SLee2013} presents a first order pressure correction method in standard form and shows its unconditional stability. Here we extend this scheme by considering its rotational form and show that its first and second order versions are unconditionally stable.

  \item Reference \cite{MR3253466} presents a variant of the so-called (using the nomenclature of \cite{MR2250931}) rotational pressure correction method that seems to deliver optimal orders of convergence. However, no analysis of this method is provided. It is our purpose here to partially bridge this gap.
\end{enumerate}

Let us outline the plan that we will follow to achieve these goals. 
In Section \ref{sec:equs}, we recall the recently shown equivalence \cite{Pyo} between rotational pressure correction schemes and stabilized gauge Uzawa methods. With this at hand we recast the grad-div stabilized schemes of \cite{SLee2013} in gauge Uzawa form and show unconditional stability for the first and second order variants in Section \ref{sec:Sanghyun}. We then write the scheme of \cite{MR3253466} in gauge Uzawa form and show that it is stable provided the time step and mesh size satisfy a suitable relation in Section \ref{sec:Baensch}. 
In Section \ref{sec:numerics} we provide numerical experiments to illustrate the theory and performance of our methods.

\subsection{Notation and preliminaries}
\label{sub:notation}

For $D \in \{ \Omega,\Gamma\}$, we denote by $\scl\cdot,\cdot\scr_D$ the $L^2(D)$ inner product, if $D=\Omega$ we will often omit it. The norm of $L^2(\Omega)$ is denoted by $\|\cdot\|$. To shorten the exposition, given two scalar functions $\phi$ and $\varphi$, we set
\begin{equation}
\label{eq:trnorm}
  \sbl \phi,\varphi \sbr := \scl \phi,\varphi \scr + \scl \GRAD \phi, \GRAD \varphi \scr \text{ and }  
  \trnorm{\varphi}^2 := \sbl \varphi, \varphi \sbr.
\end{equation}
We denote nonessential constants by $C$ and their value might change at each occurrence. To avoid irrelevant technicalities we omit the convective term $\DIV(\ue \otimes \ue)$.
\shlee{In addition, we recall the elementary identity}
\[
  \AJS{2p(p-q) = p^2 - q^2 + (p-q)^2}.
\]
\AJS{To unify the discussion, we define the bilinear form
\begin{equation}
\label{eq:defofcalA}
  \calA \scl \bv,\bw\scr := \frac1\Reyn  \begin{dcases}
                    \scl \GRAD \bv, \GRAD \bw\scr, & \text{open boundary conditions}, \\
                    \scl \vare(\bv), \vare(\bw) \scr, & \text{traction boundary conditions},
                   \end{dcases}
\end{equation}
and set
\[
  \calA(\bv) := \calA\scl\bv,\bv\scr^{1/2}.
\]}
We denote by $\kappa$ the best constant in the inequality
\begin{equation}
\label{eq:Korn}
  \kappa \AJS{d} \left( \| \bv \|^2 + \| \GRAD \bv \|^2 \right) \leq \| \bv \|^2 + \AJS{\Reyn \calA(\bv)^2}, \quad \forall \bv \in \Hund,
\end{equation}
\AJS{which is either trivial (open boundary conditions) or is a consequence of Korn's inequality (traction boundary conditions)}.
By $\|\cdot\|_{1/2,\Gamma}$ we denote the norm of $\bH^{1/2}({\Gamma})$, the space of traces of $\Hund$ on $\Gamma$. We recall that
\begin{equation}
\label{eq:trace}
  \| \bv\|_{1/2,\Gamma} \leq C \left( \| \bv \|^2 + \| \GRAD \bv \|^2 \right)^{1/2}.
\end{equation}

To handle the space discretization, 
we assume we have at hand finite dimensional spaces $\bX_h \subset \Hund$ and $Q_h \subset \Hun$ which we assume LBB stable 
for $h>0$.
We set $M_h = Q_h \cap \Hunz$. These spaces can be easily realized with finite elements and, in this case, $h$ denotes the mesh size. We assume that the following inverse inequality holds
\begin{equation}
\label{eq:invineq}
  \| \partial_\bn w_h \|_{1/2,\Gamma} \leq C h^{-1} \| \GRAD w_h \|, \quad \forall w_h \in M_h .
\end{equation}

The time discretization is carried out by choosing $\calK\in \polN$, the number of time steps, and setting the time step to be $\dt = T/\calK$. We set $t_k = k\dt$ and for a time dependent function we denote $\phi^k = \phi(t_k)$ and $\phi^\dt = \{ \phi^k \}_{k=0}^\calK$. Over these sequences we define the operators
\begin{equation}
\label{eq:defofdelta}
  \frakd \phi^{k+1} := \phi^{k+1} - \phi^k,
\end{equation}
\begin{equation}
\label{eq:defofbdf}
  \bdf{\phi^{k+1}}{m} := \begin{dcases}
                       \frac1{\dt} \frakd \phi^{k+1} & m = 1, \\
                       \frac1{2\dt} \left( 3 \phi^{k+1} - 4\phi^k + \phi^{k-1} \right) & m = 2,
                     \end{dcases}
\end{equation}
and
\begin{equation}
\label{eq:defofsharp}
  \phi^{\sharp,k} := \begin{dcases}
                       \frakd \phi^k & m = 1, \\
                       \frac43 \frakd \phi^k - \frac13 \frakd \phi^{k-1} & m = 2.
                \end{dcases}
\end{equation}
for $m = 1,2$.
{If $E$ is a normed space with norm $\|\cdot \|_E$ and} $\phi^{\dt} \subset E$, we define the following discrete (in time) norms:
\begin{equation}
\| \phi^{\tau} \|_{\ell^2(E)} := \left( \dt \sum^\calK_{k=1} \|\phi^k\|^2_E \right)^{1/2}, \qquad 
\| \phi^{\tau}\|_{\ell^{\infty}(E)} := \max_{0 \leq k \leq \calK} ( \| \phi^k \|_E ).
\end{equation}

\shlee{Next, we introduce couple of lemmas which are useful for our analyses.} \AJS{In the analysis of first order schemes we will use the following variant of the well-known discrete Gr\"onwall inequality \cite[Lemma 5.1]{MR1043610}:

\begin{lem}[Discrete Gr\"onwall]
\label{lem:gronwall}
Let $B\geq 0$ and $a^\dt,b^\dt,c^\dt,\gamma^\dt \subset \Real$ be sequences of nonnegative numbers such that, for $n\geq 0$, verify
\[
  a^n + \dt \sum_{k=1}^n b^k \leq \dt \sum_{k=0}^{n-1} \gamma^k a^k + \dt \sum_{k=0}^{n-1} c^k + B.
\]
If $\dt \gamma^k<1$ for all $k \geq 0$, then we have
\[
  a^n + \dt \sum_{k=1}^n b^k \leq \exp\left( \dt \sum_{k=0}^n \sigma^k \gamma^k \right) \left( \dt \sum_{k=0}^{n-1} c^k + B \right),
\]
where $\sigma^k = (1-\dt \gamma^k)^{-1}$.
\end{lem}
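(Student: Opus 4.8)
The plan is to follow the classical route for discrete Gr\"onwall estimates: reduce the accumulated inequality to a one-step recursion for a nondecreasing majorant, and then resolve that recursion by a product-to-exponential bound. First I would introduce the partial sums $G^n := \dt\sum_{k=0}^{n-1}\gamma^k a^k$, which involves only $a^0,\dots,a^{n-1}$, and $D^n := \dt\sum_{k=0}^{n-1} c^k + B$, noting that $D^n$ is nondecreasing in $n$ because the $c^k$ are nonnegative. The assumption $\dt\gamma^n<1$ is used to absorb the self-referential appearance of $a^n$ on the right: collecting the $\dt\gamma^n a^n$ contribution on the left and dividing by $1-\dt\gamma^n$ introduces the weight $\sigma^n=(1-\dt\gamma^n)^{-1}\geq 1$, and keeping (with the harmless factor $\sigma^n\geq1$) the nonnegative $b$-terms on the left yields the basic estimate $a^n + \dt\sum_{k=1}^n b^k \leq \sigma^n(G^n + D^n) = \sigma^n H^n$, where I set $H^n := G^n + D^n$.

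Next I would turn this into a recursion for the majorant $H^n$. Since $a^n \leq \sigma^n H^n$, updating $G^{n+1} = G^n + \dt\gamma^n a^n$ and adding $D^{n+1}$ gives, after regrouping and using $D^{n+1}-D^n = \dt c^n \geq 0$, a clean linear recursion $H^{n+1} \leq (1+\dt\sigma^n\gamma^n)H^n + (D^{n+1}-D^n)$, started from $H^0 = D^0 = B$. Iterating this recursion and bounding each product $\prod_j (1+\dt\sigma^j\gamma^j)$ by $\exp(\dt\sum_j\sigma^j\gamma^j)$, while telescoping the accumulated increments $\sum_i (D^{i+1}-D^i)$ back to $D^n - D^0$, collapses everything to $H^n \leq \exp\!\big(\dt\sum_{k=0}^{n-1}\sigma^k\gamma^k\big)\,D^n$. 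Combining this with $a^n + \dt\sum_{k=1}^n b^k \leq \sigma^n H^n$ from the first step then produces the stated bound, once $\sigma^n$ is absorbed into the exponential so that the exponent grows from $\sum_{k=0}^{n-1}$ to $\sum_{k=0}^{n}$.

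The only genuinely non-mechanical ingredient, and the step I expect to be the main obstacle, is precisely that final absorption: it rests on the elementary scalar inequality $\sigma^n = (1-x)^{-1} \leq \exp\!\big(x/(1-x)\big) = \exp(\dt\sigma^n\gamma^n)$ for $x = \dt\gamma^n \in [0,1)$, equivalently $-\ln(1-x) \leq x/(1-x)$, which follows by comparing the series $\sum_{k\geq1} x^k/k$ and $\sum_{k\geq1} x^k$ term by term. This inequality is what both keeps the weights $\sigma^k\gamma^k$ intact in the product-to-exponential passage and lets the leftover $\sigma^n$ be swallowed without loss. Everything else is index bookkeeping in the telescoping sum, the monotonicity of $D^n$, and the repeated use of $1+t \leq e^t$. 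I would close by noting that if $a^n$ in fact does not appear on the right-hand side, the same argument applies verbatim with $\sigma^n$ replaced by $1$, so that the stated estimate holds a fortiori and the hypothesis $\dt\gamma^n<1$ is then needed only to make $\sigma^n$ well defined.
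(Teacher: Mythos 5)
Your proof is correct. Note, however, that the paper does not prove this lemma at all: it is quoted verbatim (as a ``variant'') from Heywood and Rannacher \cite[Lemma 5.1]{MR1043610}, so there is no in-paper argument to compare against, and your write-up supplies a complete, self-contained proof where the authors give only a citation. Your argument --- majorize by $H^n = \dt\sum_{k=0}^{n-1}\gamma^k a^k + \dt\sum_{k=0}^{n-1}c^k + B$, derive the one-step recursion $H^{n+1}\leq(1+\dt\sigma^n\gamma^n)H^n+\dt c^n$ from $a^n\leq\sigma^n H^n$, iterate, and convert products to exponentials via $1+t\leq e^t$ and $(1-x)^{-1}\leq\exp\bigl(x/(1-x)\bigr)$ --- is the standard route and all the steps check out, including the final absorption of the leftover $\sigma^n$ that extends the exponent's sum from $n-1$ to $n$. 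One point worth making explicit: in the statement as printed in this paper the right-hand sums stop at $k=n-1$, so $a^n$ never appears on the right and there is genuinely nothing to ``collect and divide by $1-\dt\gamma^n$''; your concluding remark correctly identifies this, and since $\sigma^n\geq 1$ your intermediate bound $a^n+\dt\sum_{k=1}^n b^k\leq\sigma^n H^n$ holds trivially in this setting, so the whole argument goes through unchanged. In the original Heywood--Rannacher formulation the sums do run to $k=n$, which is where the hypothesis $\dt\gamma^k<1$ and the weights $\sigma^k$ earn their keep; your proof is in fact written for that stronger version and yields the paper's weaker variant a fortiori, which is arguably the cleanest way to justify why the $\sigma^k$ appear in the stated bound at all.
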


\shlee{On the other hand, we will employ the following three-term recursion inequalities of \cite[Propositions 5.1 and 5.2]{GuermondSalgadoErrorAnalysis} for the analysis of second order schemes. }

\begin{lem}[Three-term recursion]
\label{lem:threeterm}
Let $A,B,C \in \Real$ satisfy $A>0$, $C \geq 0$ \shlee{with} $A+B+C \leq 0$ and assume that the quadratic equation $Ar^2+Br+C=0$ has two nonzero real roots $r_1$ and $r_2$. \shlee{In addition,} let $a^\dt$ solve the inequality
\[
  Aa^{k+1} + B a^k + Ca^{k-1} \leq g^{k+1}, \qquad k \geq 1.
\]
Then there are constants $c_1$ and $c_2$ that depend only on $a^0$ and $a^1$ such that, for every $n \geq 2$, we have
\[
  a^n \leq c_1 r_1^n + c_2 r_2^n + \frac1A \sum_{k=2}^n r_1^{n -k} \sum_{s=2}^k r_2^{k-s} g^s.
\]
\end{lem}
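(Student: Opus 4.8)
The plan is to reduce the second-order recursion to a pair of first-order recursions by factoring the associated difference operator, and then to solve each first-order inequality by direct iteration. The decisive preliminary point is that \emph{both roots are strictly positive}, since this is exactly what lets the inequalities (as opposed to equalities) propagate through the iterations without reversing direction. I would establish this from Vieta's relations $r_1 + r_2 = -B/A$ and $r_1 r_2 = C/A$: because $A>0$ and $C\geq 0$, the product $r_1 r_2 = C/A \geq 0$ shows the two real, nonzero roots share a sign; and because $A+B+C\leq 0$ together with $A>0$, $C\geq 0$ forces $B\leq -(A+C)<0$, the sum $r_1+r_2 = -B/A$ is positive, so both roots are positive.

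Next I would factor the operator. Using Vieta,
\[
  A a^{k+1} + B a^k + C a^{k-1} = A\big[(a^{k+1} - r_2 a^k) - r_1 (a^k - r_2 a^{k-1})\big].
\]
Introducing $b^k := a^{k+1} - r_2 a^k$ recasts the hypothesis as the first-order inequality $b^k \leq r_1 b^{k-1} + g^{k+1}/A$ for $k\geq 1$. Since $r_1>0$, iterating preserves the inequality and yields $b^k \leq r_1^k b^0 + A^{-1}\sum_{j=1}^{k} r_1^{k-j} g^{j+1}$.

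I would then undo the second factor. From $a^{k+1} = r_2 a^k + b^k$ and $r_2>0$, iterating down to $a^1$ gives $a^n = r_2^{n-1} a^1 + \sum_{m=1}^{n-1} r_2^{n-1-m} b^m$, and substituting the bound on $b^m$ (legitimate because $r_2>0$) produces the claimed decomposition. The terms carrying $a^1$ and $b^0 = a^1 - r_2 a^0$ assemble, via the geometric sums $\sum_m r_2^{n-1-m} r_1^m$, into a combination $c_1 r_1^n + c_2 r_2^n$ with $c_1,c_2$ depending only on $a^0,a^1$. For the forcing part, reindexing by $s=j+1$ shows that the coefficient of $g^s$ equals $A^{-1}\sum_{i=0}^{n-s} r_1^i r_2^{n-s-i}$; this sum is symmetric in $r_1$ and $r_2$, and hence agrees with the coefficient of $g^s$ in the stated double sum $A^{-1}\sum_{k=2}^n r_1^{n-k}\sum_{s=2}^k r_2^{k-s} g^s$, giving the equality of the two forcing contributions.

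The main obstacle I anticipate is bookkeeping rather than analysis: verifying positivity of the roots (on which the entire inequality-preserving iteration rests) and reconciling the index conventions so that the doubly nested forcing sum produced by the two iterations matches, term by term, the symmetric form written in the statement.
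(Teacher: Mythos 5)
Your argument is correct: the factorization $Aa^{k+1}+Ba^k+Ca^{k-1}=A\bigl[(a^{k+1}-r_2a^k)-r_1(a^k-r_2a^{k-1})\bigr]$, the verification via Vieta that both roots are positive (so the inequalities survive iteration), and the reindexing that identifies the two symmetric forms of the forcing sum are all sound. The paper does not reproduce a proof but cites \cite{GuermondSalgadoErrorAnalysis}, whose argument is this same reduction to two first-order recursions, so your proposal is essentially the intended one.
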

}

\section{Stabilized gauge Uzawa equals rotational pressure correction}
\label{sec:equs}

\AJS{We begin by recalling the result of \cite[\S3.6]{MR2250931}: when supplemented with no-slip boundary conditions, the rotational pressure correction method of Timmermans \etal \cite{TMV96} is equivalent to the scheme of Kim and Moin \cite{KIM1985308}. More recently, Pyo \cite{Pyo} showed that, in the same setting,} the gauge Uzawa method and the rotational pressure correction method are, up to a change of variables, equivalent. Let us recall this result in this section.

\subsection{Equivalence}
\label{sub:equiv}
Since we are concerned with time discretization schemes, let us operate in a semi-discrete setting to show the equivalence. To simplify things even further, we will only consider first order schemes ($m=1$). Little or no modification is necessary if the equivalence wants to be shown for $m=2$.

\subsubsection{The stabilized gauge Uzawa method}
This method was introduced in \cite{MR2177795} and computes sequences $\wbu_\gu^\dt$, $\bu_\gu^\dt$, $p_\gu^\dt$, $\psi^\dt$ and $q^\dt$ as follows:

\begin{enumerate}[$\bullet$]
  \item \textbf{Initialization:} Set
  \[
    \wbu_\gu^0 = \bu_\gu^0 = \ue^0, \quad p_\gu^0 = \pe^0, \quad \psi^0 = q^0 = 0.
  \]
\end{enumerate}

For $k=0,\ldots,\calK-1$, we compute:

\begin{enumerate}[$\bullet$]
  \item \textbf{Velocity update:} Find $\bu_\gu^{k+1}$ that solves
  \begin{equation}
  \label{eq:velgu}
    \frac{ \bu_\gu^{k+1} - \wbu_\gu^k}\dt - \frac2\Reyn \DIV( \vare(\bu_\gu^{k+1}) ) + \GRAD p_\gu^k = \bef^{k+1}, \qquad 
    \AJS{\bu_{\gu|\Gamma}^{k+1}=\boldsymbol{0}}.
  \end{equation}
  
  \item \textbf{Projection:} Find $\psi^{k+1}$ and $\wbu^{k+1}_{{\gu}}$ that satisfy
  \begin{equation}
  \label{eq:projgu}
    \frac{ \wbu_\gu^{k+1} - \bu_\gu^{k+1} }\dt + \GRAD \frakd \psi^{k+1} = \boldsymbol{0}, \qquad \DIV \wbu_\gu^{k+1} = 0, \qquad \AJS{\wbu_{\gu|\Gamma}^{k+1} \cdot \bn =0}.
  \end{equation}
  
  \item \textbf{Pressure update:} Find $q^{k+1}$ and $p^{k+1}_{{\gu}}$ that satisfy
  \begin{equation}
  \label{eq:presgu}
    \frakd q^{k+1} = -\DIV \bu_\gu^{k+1}, \qquad p_\gu^{k+1} = \psi^{k+1} + \frac1\Reyn q^{k+1}.
  \end{equation}
\end{enumerate}

\subsubsection{Rotational pressure correction method}
This method was introduced in \cite{TMV96}. It computes sequences $\wbu_\rot^\dt$, $\bu_\rot^\dt$, $p_\rot^\dt$ and $\phi^\dt$ as follows:
\begin{enumerate}[$\bullet$]
  \item \textbf{Initialization:} Set
  \[
    \wbu_\rot^0 = \bu_\rot^0 = \ue^0, \quad p_\rot^0 = \pe^0, \quad \phi^0 = 0.
  \]
For $k=0,\ldots,\calK-1$, we compute:
  
  \item \textbf{Velocity update:} Find $\bu_\rot^{k+1}$ that solves
  \begin{equation}
  \label{eq:velrot}
    \frac{ \bu_\rot^{k+1} - \wbu_\rot^k}\dt - \frac2\Reyn \DIV( \vare(\bu_\rot^{k+1}) ) + \GRAD p_\rot^k = \bef^{k+1}, \qquad 
    \AJS{\bu_{\rot|\Gamma}^{k+1}=\boldsymbol{0}}.
  \end{equation}
  
  \item \textbf{Projection:} Find $\phi^{k+1}$ and $\wbu^{k+1}_{{\rot}}$ that satisfy
  \begin{equation}
  \label{eq:projrot}
    \frac{ \wbu_\rot^{k+1} - \bu_\rot^{k+1} }\dt + \GRAD \phi^{k+1} = \boldsymbol{0}, \qquad \DIV \wbu_\rot^{k+1} = 0, \qquad \AJS{\wbu_{\rot|\Gamma}^{k+1} \cdot \bn =0}.
  \end{equation}
  
  \item \textbf{Pressure update:} The pressure $p^{k+1}_{{\rot}}$ is defined by
  \begin{equation}
  \label{eq:presrot}
    \frakd p_\rot^{k+1} = \phi^{k+1} - \frac1\Reyn \DIV \bu_\rot^{k+1}.
  \end{equation}

\end{enumerate}

The equivalence of these two methods is the content of this simple, yet illuminating, result.

\begin{prop}[Equivalence]
\label{prop:equiv}
The sequences produced by algorithms \eqref{eq:velgu}--\eqref{eq:presgu} and \eqref{eq:velrot}--\eqref{eq:presrot} verify $\wbu_\gu^\dt = \wbu_\rot^\dt$, $\bu_\gu^\dt = \bu_\rot^\dt$ and $p_\gu^\dt = p_\rot^\dt$.
\end{prop}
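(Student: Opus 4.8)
The plan is to argue by induction on the time index $k$, exploiting the observation that the two algorithms share the velocity update and projection substeps verbatim, so that the entire content of the proposition reduces to correctly matching the scalar auxiliary variables of the two schemes. The base case is immediate: the initializations coincide, since $\wbu_\gu^0 = \wbu_\rot^0 = \ue^0$, $\bu_\gu^0 = \bu_\rot^0 = \ue^0$, and $p_\gu^0 = p_\rot^0 = \pe^0$. I would then fix $k$, assume $\wbu_\gu^k = \wbu_\rot^k$, $\bu_\gu^k = \bu_\rot^k$ and $p_\gu^k = p_\rot^k$, and show the corresponding equalities at level $k+1$.

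For the velocity substep, note that \eqref{eq:velgu} and \eqref{eq:velrot} are identical in form; by the inductive hypothesis their data ($\wbu^k$ and $\GRAD p^k$) agree, and since $\bef^{k+1}$ is common, the two problems coincide exactly. As each is a coercive linear elliptic system with homogeneous Dirichlet data — coercivity being guaranteed by \eqref{eq:Korn} — it admits a unique solution, whence $\bu_\gu^{k+1} = \bu_\rot^{k+1}$. For the projection substep I would introduce the key identification $\phi^{k+1} = \frakd \psi^{k+1}$; under this correspondence \eqref{eq:projgu} and \eqref{eq:projrot} become the same (well-posed, Leray--Helmholtz-type) problem for the common right-hand side $\bu_\gu^{k+1} = \bu_\rot^{k+1}$. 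Uniqueness then yields $\wbu_\gu^{k+1} = \wbu_\rot^{k+1}$ together with $\GRAD \phi^{k+1} = \GRAD \frakd \psi^{k+1}$, so that $\phi^{k+1}$ and $\frakd \psi^{k+1}$ agree up to an additive constant; this is precisely the ``change of variables'' in the statement, and fixing the normalization makes the two equal.

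The crux is the pressure substep, where one must reconcile the gauge splitting $p_\gu^{k+1} = \psi^{k+1} + \tfrac1\Reyn q^{k+1}$ with the single rotational update \eqref{eq:presrot}. Here I would take the increment of the gauge pressure and substitute the defining relations $\frakd q^{k+1} = -\DIV \bu_\gu^{k+1}$ and $\phi^{k+1} = \frakd \psi^{k+1}$, obtaining
\[
  \frakd p_\gu^{k+1} = \frakd \psi^{k+1} + \tfrac1\Reyn \frakd q^{k+1} = \phi^{k+1} - \tfrac1\Reyn \DIV \bu_\gu^{k+1} = \frakd p_\rot^{k+1},
\]
the last equality using $\bu_\gu^{k+1} = \bu_\rot^{k+1}$ and \eqref{eq:presrot}. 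Since $p_\gu^k = p_\rot^k$ by hypothesis, this closes the induction with $p_\gu^{k+1} = p_\rot^{k+1}$. The main obstacle is not analytical but organizational: one must keep careful track of the three scalar fields $\psi$, $q$ and $\phi$ and account for the fact that, being determined only through their gradients (or increments) by the projection and pressure steps, they are fixed only up to additive constants; handling this indeterminacy cleanly is exactly what the telescoping identity above accomplishes.
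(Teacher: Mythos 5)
Your argument is correct and is essentially the paper's own proof: the key steps — observing that initialization and velocity updates coincide, identifying $\phi^{k+1} = \frakd\psi^{k+1}$ to match the projection steps, and applying $\frakd$ to the gauge pressure update $p_\gu^{k+1} = \psi^{k+1} + \tfrac1\Reyn q^{k+1}$ to recover \eqref{eq:presrot} — are exactly those in the paper. The induction wrapper and the remarks on well-posedness and additive constants are a reasonable elaboration but do not change the substance.
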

\begin{proof}
Evidently, the initialization steps coincide. The velocity update steps \eqref{eq:velgu} and \eqref{eq:velrot} also coincide. Defining $\phi^{k+1} = \frakd \psi^{k+1}$ makes the projection steps \eqref{eq:projgu} and \eqref{eq:projrot} identical. Finally, applying the operator $\frakd$ to the second equation of \eqref{eq:presgu}, using the first equation and the definition of $\phi^{k+1}$ yields \eqref{eq:presrot}. This allows us to conclude.
\end{proof}

In light of this equivalence, in what follows we will drop the subscripts $\gu$ and $\rot$.

\subsection{Stabilized gauge Uzawa with elimination of the solenoidal velocity}
\label{sub:impl}
The projection step \eqref{eq:projgu} entails finding a solenoidal function $\wbu^\dt$, which can be rather cumbersome to approximate using finite elements. For this reason, in practice, this variable is usually eliminated from the scheme. This can be achieved as follows: Add to the velocity step \eqref{eq:velgu} the first equation in \eqref{eq:projgu} at time $t=t_k$, this eliminates $\wbu^k$ from the velocity step. Then, taking the divergence of the first equation in \eqref{eq:projgu} and using the second one 
we substitute $\wbu^{k+1}$ in the projection step. This yields the following algorithm:
\begin{enumerate}[$\bullet$]
  \item \textbf{Initialization:} Set
  \[
    \bu^0 = \ue^0, \quad p^0 = \pe^0, \quad \psi^0 = q^0 = 0.
  \]
  For $k=0,\ldots,\calK-1$, we compute:
  
  \item \textbf{Velocity update:} Find $\bu^{k+1}$ that solves
  \begin{equation}
  \label{eq:velguimp}
    \frac{ \bu^{k+1} - \bu^k}\dt - \frac2\Reyn \DIV( \vare(\bu^{k+1}) ) + \GRAD( p^k + \frakd \psi^k ) = \bef^{k+1}, \qquad 
    \AJS{\bu_{|\Gamma}^{k+1} = \boldsymbol{0}}.
  \end{equation}
  
  \item \textbf{Projection:} Find $\psi^{k+1}$ that satisfies
  \begin{equation}
  \label{eq:projguimp}
    \LAP \frakd \psi^{k+1} = \frac1\dt \DIV \bu^{k+1}, \qquad \AJS{\partial_\bn \frakd \psi^{k+1}_{|\Gamma} = 0}.
  \end{equation}
  
  \item \textbf{Pressure update:} Find $q^{k+1}$ and $p^{k+1}$ that satisfy
  \begin{equation}
  \label{eq:presguimp}
    \frakd q^{k+1} = -\DIV \bu^{k+1}, \qquad p^{k+1} = \psi^{k+1} + \frac1\Reyn q^{k+1}.
  \end{equation}
\end{enumerate}

\AJS{As the reader can easily realize, the choice of no-slip boundary conditions carried little or no relevance in the discussions above. Therefore, a similar reasoning could be provided for other types of boundary conditions}. The analysis of variants of scheme \eqref{eq:velguimp}--\eqref{eq:presguimp} that are suitable for \AJS{open and} traction boundary conditions is the main content of this work.

\section{A grad-div stabilized scheme}
\label{sec:Sanghyun}

Let us now turn to an extension of the scheme discussed in \cite{SLee2013}. In this work the authors modified the pressure correction scheme in standard form by introducing a grad-div stabilization and consistent modifications of the boundary condition in the velocity and pressure. This allowed them to obtain stability and optimal error estimates for a first order scheme. It is expected then, that a rotational version of this scheme allows us to obtain higher order schemes. The main purpose of this section is to present the first steps in this direction, namely we present the scheme and show its unconditional stability.

We compute sequences $\bu_h^\dt \subset \bX_h$, $\psi_h^\dt,q_h^\dt,p_h^\dt \subset Q_h$, where $\bu_h^\dt$ and $p_h^\dt$ approximate the velocity and pressure, respectively. The scheme reads:
\begin{enumerate}[$\bullet$]
  \item \textbf{Initialization:} Set, for $k=0, \ldots, m-1$,
  \begin{equation}
  \label{eq:init}
    \bu_h^k = \Pi_{\bX_h} \ue^k, \quad p_h^k = \Pi_{Q_h} \pe^k, \quad q_h^k = \psi_h^k = 0,
  \end{equation}
  where $\Pi_{\bX_h}$ and $\Pi_{Q_h}$ are the $L^2$-projection operators onto the respective spaces. After this step, for $k = 0,\ldots, \calK-1$, compute:

  \item \textbf{Velocity update:} Find $\bu_h^{k+1} \in \bX_h$ that solves, for all $\bv_h \in \bX_h$,
  \begin{equation}
  \label{eq:velsl}
    \scl \bdf{\bu_h^{k+1}}{m}, \bv_h \scr + \AJS{\calA \scl \bu_h^{k+1}, \bv_h \scr} - \scl p_h^k + \psi_h^{\sharp,k}, \DIV \bv_h \scr +
    \alpha \scl \DIV \bdf{ \bu_h^{k+1} }{m}, \DIV \bv_h \scr =
    \scl \bef^{k+1}, \bv_h \scr + \scl \bg^{k+1}, \bv_h \scr_\Gamma,
  \end{equation}
  where $\bdf{\cdot}{m}$, \AJS{$\calA\scl\cdot,\cdot\scr$} and $(\cdot)^{\sharp,k}$ where defined in \eqref{eq:defofbdf}, \AJS{\eqref{eq:defofcalA}} and \eqref{eq:defofsharp}, respectively. The parameter $\alpha \geq 1$ is user defined.
  
  \item \textbf{Projection:} Find $\psi_h^{k+1} \in Q_h$ that, for every $z_h \in Q_h$, satisfies
  \begin{equation}
  \label{eq:projsl}
    \sbl \frakd \psi_h^{k+1}, z_h \sbr = -\frac\beta\dt \scl \DIV \bu_h^{k+1}, z_h \scr,
  \end{equation}
  where $\beta = 1 + \tfrac12( m -1 )$.

  \item \textbf{Divergence correction:} Find $q_h^{k+1} \in Q_h$ that solves
  \begin{equation}
  \label{eq:divsl}
    \scl \frakd q_h^{k+1}, r_h \scr = -\scl \DIV \bu_h^{k+1}, r_h \scr \quad \forall r_h \in Q_h.
  \end{equation}
  
  \item \textbf{Pressure update:} The new approximation of the pressure $p_h^{k+1} \in Q_h$ is obtained by setting
  \begin{equation}
  \label{eq:pressl}
    p_h^{k+1} = \psi_h^{k+1} + \frac1\Reyn q_h^{k+1}.
  \end{equation}

\end{enumerate}

\subsection{Stability of the first order scheme}
\label{sub:fosl}
Let us now show the stability of scheme \eqref{eq:velsl}--\eqref{eq:pressl} for $m=1$. This is mainly done to clarify the steps necessary to obtain the result. Our main objective is to show the stability for the second order variant. This will be carried out in \S\ref{sub:sosl}. To avoid irrelevant technicalities, we assume that $\bef \equiv \boldsymbol{0}$ and $\bg \equiv \boldsymbol{0}$.

\begin{thm}[Stability for $m=1$]
\label{thm:stabfosl}
Assume that $\alpha > \max\{1, 2/\Reyn\}$. If $m = 1$ {and $\dt<\tfrac12$} then the scheme \eqref{eq:velsl}--\eqref{eq:pressl} is stable, in the sense that there is a constant $C$, independent of the solution and discretization parameters such that,
\[
  \| \bu_h^\dt \|_{\ell^\infty(\bL^2)} + \AJS{\| \calA(\bu_h^\dt) \|_{\ell^2(\Real)}} \leq C.
\]
The constant, however, might depend on $\Reyn$.
\end{thm}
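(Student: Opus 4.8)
The plan is to derive a discrete energy estimate by testing the velocity update against the current velocity and then tracking how the pressure coupling cascades into a pressure/divergence-correction energy. Concretely, with $\bef\equiv\bg\equiv\boldsymbol0$ and $m=1$, I would take $\bv_h=2\dt\,\bu_h^{k+1}$ in \eqref{eq:velsl}. Applying the elementary identity $2p(p-q)=p^2-q^2+(p-q)^2$ recalled in \S\ref{sub:notation} to the time-derivative term $\scl\bdf{\bu_h^{k+1}}{1},\bv_h\scr$ and to the grad-div term produces the telescoping kinetic energy $\|\bu_h^{k+1}\|^2-\|\bu_h^k\|^2$, the nonnegative numerical dissipation $\|\frakd\bu_h^{k+1}\|^2$, the viscous dissipation $2\dt\,\calA(\bu_h^{k+1})^2$, and the telescoping grad-div contribution $\alpha\big(\|\DIV\bu_h^{k+1}\|^2-\|\DIV\bu_h^k\|^2+\|\DIV\frakd\bu_h^{k+1}\|^2\big)$. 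What remains to be controlled is the pressure coupling term $-2\dt\,\scl p_h^k+\frakd\psi_h^k,\DIV\bu_h^{k+1}\scr$.

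The heart of the argument is to convert this coupling into telescoping quantities plus controllable residuals. Using the pressure update \eqref{eq:pressl} I would write $p_h^k+\frakd\psi_h^k=(2\psi_h^k-\psi_h^{k-1})+\tfrac1\Reyn q_h^k$, which lies in $Q_h$ and exposes an extrapolated pressure increment $2\psi_h^k-\psi_h^{k-1}$ together with the rotational correction $\tfrac1\Reyn q_h^k$. I would then feed the extrapolated-$\psi$ part through the projection \eqref{eq:projsl} and the rotational part through the divergence correction \eqref{eq:divsl}. After a further use of the polarization identity, the $\psi$ part yields the telescoping term $\dt^2\big(\trnorm{\psi_h^{k+1}}^2-\trnorm{\psi_h^k}^2\big)$ plus a second-difference remainder measured by $\dt^2\trnorm{\frakd\frakd\psi_h^{k+1}}^2$, while the rotational part yields $\tfrac\dt\Reyn\big(\|q_h^{k+1}\|^2-\|q_h^k\|^2\big)$ minus $\tfrac\dt\Reyn\|\frakd q_h^{k+1}\|^2$. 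This naturally suggests the discrete energy $\mathcal E^k:=\|\bu_h^k\|^2+\alpha\|\DIV\bu_h^k\|^2+\dt^2\trnorm{\psi_h^k}^2+\tfrac\dt\Reyn\|q_h^k\|^2$.

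The main obstacle, and the place where the hypotheses on $\alpha$ enter, is bounding these residuals against the available grad-div dissipation. From \eqref{eq:divsl} one reads off $\|\frakd q_h^{k+1}\|\le\|\DIV\bu_h^{k+1}\|$, since $\frakd q_h^{k+1}$ is the $L^2$-projection of $-\DIV\bu_h^{k+1}$ onto $Q_h$; differencing \eqref{eq:projsl} and using coercivity of $\sbl\cdot,\cdot\sbr$ gives the elliptic-type bound $\dt\,\trnorm{\frakd\frakd\psi_h^{k+1}}\le\|\DIV\frakd\bu_h^{k+1}\|$. Thus both residuals are measured precisely by the grad-div quantities. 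The second-difference $\psi$ residual is absorbed into $\alpha\|\DIV\frakd\bu_h^{k+1}\|^2$ as soon as $\alpha>1$, whereas the rotational residual, of size $\tfrac\dt\Reyn\|\DIV\bu_h^{k+1}\|^2$, must be dominated by the grad-div energy; tracking the constants through the Young splittings used in this absorption is what pins the threshold at $\alpha>\tfrac2\Reyn$. In the traction case I would invoke Korn's inequality \eqref{eq:Korn} to pass freely between $\calA(\bu_h)$ and full $H^1$ control of the velocity. The remaining $\tfrac\dt\Reyn\|\DIV\bu_h^{k+1}\|^2$ is genuinely of Gr\"onwall type, being controllable by $\tfrac{\dt}{\alpha\Reyn}\mathcal E^{k+1}$, and the condition $\dt<\tfrac12$ is exactly what guarantees the smallness requirement $\dt\gamma^k<1$ under which Lemma~\ref{lem:gronwall} applies.

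With the per-step inequality in hand, I would sum over $k=0,\dots,n-1$, telescoping $\mathcal E$ and retaining the nonnegative dissipation $\dt\sum_k\calA(\bu_h^{k+1})^2$ on the left. The first step $k=0$ is treated separately, since the sharp term $\psi_h^{\sharp,0}$ degenerates under the initialization \eqref{eq:init}. Applying the discrete Gr\"onwall inequality of Lemma~\ref{lem:gronwall} then bounds $\mathcal E^n$ and $\dt\sum_k\calA(\bu_h^{k+1})^2$ by a constant depending only on the initial data and on $\Reyn$, which yields both $\|\bu_h^\dt\|_{\ell^\infty(\bL^2)}$ and $\|\calA(\bu_h^\dt)\|_{\ell^2(\Real)}$ and hence the stated stability.
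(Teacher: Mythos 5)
Your proposal is correct and follows essentially the same path as the paper's proof: the same test functions ($\bv_h=2\dt\,\bu_h^{k+1}$, $z_h$ proportional to $\psi_h^k+\frakd\psi_h^k$, $r_h$ proportional to $q_h^k$), the same discrete energy $\|\bu_h^k\|^2+\alpha\|\DIV\bu_h^k\|^2+\dt^2\trnorm{\psi_h^k}^2+\tfrac\dt\Reyn\|q_h^k\|^2$, the same residual bounds $\dt\trnorm{\frakd^2\psi_h^{k+1}}\le\|\DIV\frakd\bu_h^{k+1}\|$ and $\|\frakd q_h^{k+1}\|\le\|\DIV\bu_h^{k+1}\|$ absorbed via $\alpha>\max\{1,2/\Reyn\}$, and a final application of Lemma~\ref{lem:gronwall}. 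No substantive differences.
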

\begin{proof}
First, we set $\bv_h = 2\dt \bu_h^{k+1}$ in \eqref{eq:velsl}, then multiply \eqref{eq:pressl} by $2\dt \Pi_{Q_h}\DIV \bu_h^{k+1}$ at time $t=t_k$ and finally set $r_h = 2\tfrac\dt\Reyn q_h^k$ in \eqref{eq:divsl}. Adding the ensuing identities yields
\begin{multline}
\label{eq:fo1sl}
  \frakd \left(\| \bu_h^{k+1}\|^2 +  \alpha \| \DIV \bu_h^{k+1} \|^2 + \frac\dt\Reyn \|q_h^{k+1}\|^2  \right) + 
  \| \frakd \bu_h^{k+1} \|^2 + \alpha \|\frakd \DIV \bu_h^{k+1} \|^2 \\
  + \AJS{2\dt \calA(\bu_h^{k+1})^2 }
  - 2 \dt \scl \psi_h^k + \frakd \psi_h^k, \DIV \bu_h^{k+1} \scr 
  = \frac\dt\Reyn \| \frakd q_h^{k+1} \|^2.
\end{multline}
Next, set $z_h = 2\dt^2( \psi_h^k + \frakd \psi_h^k ) \AJS{=\psi_h^{k+1}-\frakd^2 \psi_h^{k+1}}$ in \eqref{eq:projsl} to obtain
\begin{equation}
\label{eq:fo2sl}
  \dt^2 \left( \frakd \trnorm{\psi_h^{k+1}}^2 + \trnorm{\frakd \psi_h^k}^2 - \trnorm{ \frakd^2 \psi_h^{k+1} }^2 \right) = 
  -2\dt \scl \psi_h^k + \frakd \psi_h^k, \DIV \bu_h^{k+1} \scr,
\end{equation}
where $\trnorm{\cdot}$ was defined in \eqref{eq:trnorm}.
Now we add \eqref{eq:fo1sl} and \eqref{eq:fo2sl} to get
\begin{multline}
\label{eq:fo3sl}
  \frakd \left(\| \bu_h^{k+1}\|^2 +  \alpha \| \DIV \bu_h^{k+1} \|^2 + \frac\dt\Reyn \|q_h^{k+1}\|^2  + \dt^2 \trnorm{ \psi_h^{k+1} }^2 \right) + 
  \| \frakd \bu_h^{k+1} \|^2 + \alpha \|\frakd \DIV \bu_h^{k+1} \|^2 + \dt^2 \trnorm{\frakd \psi_h^k}^2 \\
  + \AJS{2\dt \calA(\bu_h^{k+1})^2 }
  = \frac\dt\Reyn \| \frakd q_h^{k+1} \|^2
  + \dt^2 \trnorm{ \frakd^2 \psi_h^{k+1} }^2.
\end{multline}
It remains to control the terms on the right hand side of \eqref{eq:fo3sl}. This can be achieved {as follows}. First, we apply $\frakd$ to \eqref{eq:projsl} and set $z_h = \frakd^2 \psi_h^{k+1}$ to derive
\begin{equation}
\label{eq:fo4sl}
  \dt^2 \trnorm{\frakd^2 \psi_h^{k+1} }^2 \leq \| \frakd \DIV \bu_h^{k+1} \|^2.
\end{equation}
Second, we set $r_h = \frakd q_h^{k+1}$ in \eqref{eq:divsl} to conclude that
\begin{equation}
\label{eq:fo5sl}
  \| \frakd q_h^{k+1} \|^2 \leq \| \DIV \bu_h^{k+1} \|^2.
\end{equation}
Combining \eqref{eq:fo3sl}--\eqref{eq:fo5sl} yields
\begin{multline}
\label{eq:almostfinfo}
  \| \bu_h^{k+1}\|^2 +  \alpha\left(1- \frac\dt2 \right) \| \DIV \bu_h^{k+1} \|^2 + \frac\dt\Reyn \|q_h^{k+1}\|^2  + \dt^2 \trnorm{ \psi_h^{k+1} }^2  + 
  \| \frakd \bu_h^{k+1} \|^2 + \AJS{2\dt \calA(\bu_h^{k+1})^2 } \\
  + \dt^2 \trnorm{\frakd \psi_h^k}^2
  \leq \| \bu_h^k\|^2 +  \alpha \| \DIV \bu_h^k \|^2 + \frac\dt\Reyn \|q_h^k\|^2  + \dt^2 \trnorm{ \psi_h^k }^2, 
\end{multline}
where we used that $\alpha > \max\{1, 2/\Reyn\}$.
\AJS{We now rewrite the previous inequality in a form that makes it suitable to apply Lemma~\ref{lem:gronwall}. Define
\begin{align*}
  a^k &= \| \bu_h^k\|^2 +  \alpha \| \DIV \bu_h^k \|^2 + \frac\dt\Reyn \|q_h^k\|^2  + \dt^2 \trnorm{ \psi_h^k }^2, \\
  b^k &= 2\calA(\bu_h^k)^2,
\end{align*}
and notice that \eqref{eq:almostfinfo} can then be rewritten as
\[
  \left( 1 - \frac\dt2 \right) a^{k+1} + \dt b^{k+1} \leq \left( 1 - \frac\dt2 \right) a^k + \frac\dt2 a^k.
\]
\shlee{Finally, we add this inequality over $k=0, \cdots, \calK-1$} and multiply the result by $\left( 1 - \frac\dt2 \right)^{-1}$ to obtain
\[
  a^\calK + \dt \sum_{k=1}^\calK b^k \leq a^0 + \dt \sum_{k=0}^{\calK-1} \frac1{2\left( 1 - \frac\dt2 \right)}a^k.
\]
\shlee{Since $\dt < \tfrac12$, we can apply the framework of Gr\"onwall's inequality in the form given by Lemma~\ref{lem:gronwall} by setting $\gamma^k=\gamma = \frac1{2\left( 1 - \frac\dt2 \right)}$, $c^\dt = 0$ and $B = a^0$.}
 Therefore, we have
\[
  a^\calK + \dt \sum_{k=1}^\calK b^k \leq a^0 \exp\left( T \sigma \gamma\right),
\]
where $\sigma = (1-\dt \gamma)^{-1}$ and 
\[
  \sigma \gamma= \frac\gamma{1-\dt\gamma} = \frac1{2-2\dt} < 1.
\]
This immediately allows us to conclude.
}
\end{proof}

\begin{rem}[Alternative pressure update step]
\label{rem:newpresup}
Following the course of the proof of Theorem~\ref{thm:stabfosl} the reader can easily verify that we can replace \eqref{eq:pressl} by
\[
  p_h^{k+1} = \psi_h^{k+1} + \frac\kappa\Reyn q_h^{k+1}
\]
and retain stability. Indeed, to obtain an alternative to \eqref{eq:fo1sl} one now needs to set $r_h = 2\tfrac{\kappa\dt}\Reyn q_h^k$ in \eqref{eq:divsl}. The first term on the right hand side of \eqref{eq:fo3sl} will now be multiplied by $\kappa$, we can then invoke \AJS{inequality \eqref{eq:Korn}} to conclude.
\end{rem}

\subsection{Stability of the second order scheme}
\label{sub:sosl}
Let us now obtain stability of the scheme \eqref{eq:velsl}--\eqref{eq:pressl} in the case $m=2$. To our knowledge, together with the scheme in \cite{MR2177143} these are the only unconditionally stable formally second order schemes for the Navier Stokes equations with traction boundary conditions. The idea of the proof is very similar to that of Theorem~\ref{thm:stabfosl}, yet it is inevitably obscured by tedious technical calculations that are necessary to properly balance all the terms. \AJS{These calculations are modifications of \cite[\S\S 5.2--5.3]{GuermondSalgadoErrorAnalysis}. In particular, in the last step, instead of a discrete Gr\"onwall's inequality we will employ the three-term recursion inequalities of Lemma~\ref{lem:threeterm}}.

\begin{thm}[Stability for $m=2$]
\label{thm:stabsosl}
If $m=2$ {and $\dt$ is sufficently small}, then the scheme \eqref{eq:velsl}--\eqref{eq:pressl} is stable provided that $\alpha > \max\{1,2/\Reyn\}$, in the sense that the solution satisfies
\[
  \| \bu_h^\dt \|_{\ell^\infty(\bL^2)} + \AJS{\| \calA(\bu_h^\dt) \|_{\ell^2(\Real)}} \leq C,
\]
where $C$ is a constant that does not depend on the solution of the scheme nor the discretization parameters. The constant $C$, however, might depend on $\Reyn$.
\end{thm}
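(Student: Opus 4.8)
The plan is to mimic the energy argument of Theorem~\ref{thm:stabfosl}, replacing the operator $\frakd$ with $\bdf{\cdot}{2}$ and using the identity underlying the second-order BDF formula. First I would test \eqref{eq:velsl} with $\bv_h = 2\dt\,\bu_h^{k+1}$. The crucial algebraic tool is the discrete analogue of the identity $2p(p-q)=p^2-q^2+(p-q)^2$ adapted to the three-level operator: for the quantity $2\scl \bdf{\bu_h^{k+1}}{2}, \bu_h^{k+1}\scr$ one has the telescoping identity producing $\|\bu_h^{k+1}\|^2$, the combination $\|2\bu_h^{k+1}-\bu_h^k\|^2$, and positive squared-difference remainders. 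This replaces the clean $\frakd(\|\cdot\|^2)$ structure of the first-order case with a ``$G$-norm'' (the standard positive-definite quadratic form associated with $\bdf{\cdot}{2}$), so the natural energy to track is $\|\bu_h^{k+1}\|^2 + \|2\bu_h^{k+1}-\bu_h^k\|^2$ together with the corresponding grad-div and $q_h$ contributions.

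Next I would handle the pressure and divergence terms exactly as before: multiply \eqref{eq:pressl} by a suitable multiple of $\Pi_{Q_h}\DIV\bu_h^{k+1}$ and test \eqref{eq:divsl} with an appropriate multiple of $q_h^k$ so that the $\frac1\Reyn\|q_h^{k+1}\|^2$ terms telescope and the coupling with $\DIV\bu_h^{k+1}$ cancels against the pressure-gradient term in the velocity step. The projection step \eqref{eq:projsl} must now be tested against the second-order analogue of $z_h=2\dt^2(\psi_h^k+\frakd\psi_h^k)$, chosen so that the $\DIV\bu_h^{k+1}$ coupling with $\psi_h^{\sharp,k}$ is absorbed; here the definition $\psi_h^{\sharp,k}=\tfrac43\frakd\psi_h^k-\tfrac13\frakd\psi_h^{k-1}$ is what makes the $\trnorm{\cdot}$-contributions telescope up to controllable remainder terms. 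Just as in \eqref{eq:fo4sl}--\eqref{eq:fo5sl}, I would apply $\frakd$ (or $\bdf{\cdot}{2}$) to \eqref{eq:projsl} and test \eqref{eq:divsl} with $\frakd q_h^{k+1}$ to bound the right-hand side remainders $\|\frakd q_h^{k+1}\|^2$ and $\trnorm{\cdot}$-terms by $\|\frakd\DIV\bu_h^{k+1}\|^2$ and $\|\DIV\bu_h^{k+1}\|^2$, which are in turn absorbed using $\alpha>\max\{1,2/\Reyn\}$.

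After summing the resulting identities I expect to reach an inequality of the form $A\,a^{k+1}+B\,a^k+C\,a^{k-1}\le g^{k+1}$, where $a^k$ collects the energy at level $k$ and $g^{k+1}$ aggregates the dissipation $2\dt\,\calA(\bu_h^{k+1})^2$ together with lower-order remainders. At this point, rather than discrete Gr\"onwall, I would invoke Lemma~\ref{lem:threeterm}: one must verify $A>0$, $C\ge0$, $A+B+C\le0$, and that $Ar^2+Br+C=0$ has two nonzero real roots with modulus at most one (so that the convolution sums $\sum r_1^{n-k}\sum r_2^{k-s}g^s$ remain bounded independently of $n$, $\dt$ and $h$).

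The main obstacle is the bookkeeping of the cross terms and the remainder squared-differences so that the coefficients $A$, $B$, $C$ genuinely satisfy the hypotheses of Lemma~\ref{lem:threeterm} with root moduli controlled by $1$. The second-order operator $\bdf{\cdot}{2}$ does not yield a perfect telescoping as $\frakd$ does; instead it produces an auxiliary positive-definite form plus sign-indefinite remainders, and the delicate point is choosing the test multiples and the smallness threshold on $\dt$ so that these remainders are dominated by the grad-div dissipation and the $\calA(\bu_h^{k+1})^2$ term. This is precisely where the constraint ``$\dt$ sufficiently small'' enters, and where the modifications of \cite[\S\S 5.2--5.3]{GuermondSalgadoErrorAnalysis} do the heavy lifting; I expect no new conceptual difficulty beyond carefully reproducing that balancing in the present grad-div stabilized, open/traction setting.
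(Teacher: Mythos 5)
Your plan follows the paper's proof essentially step for step: test \eqref{eq:velsl} with a multiple of $\bu_h^{k+1}$ (the paper uses $4\dt\,\bu_h^{k+1}$), expand $2\dt\scl\bdf{\bu_h^{k+1}}{2},\bu_h^{k+1}\scr$ via the BDF2 energy identity, test \eqref{eq:pressl}, \eqref{eq:divsl} and \eqref{eq:projsl} with the analogous second-order multiples (the paper takes $z_h=\tfrac{8\dt^2}{3}(\psi_h^k+\psi_h^{\sharp,k})$), bound the remainders by applying $\frakd$ and $\frakd^2$ to \eqref{eq:projsl} and testing \eqref{eq:divsl} with $\frakd q_h^{k+1}$, absorb them using $\alpha>\max\{1,2/\Reyn\}$, and close with Lemma~\ref{lem:threeterm}. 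So the approach is the right one and matches the paper.

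One point in your verification plan is wrong as stated and would block you if you insisted on it: you require that both roots of $Ar^2+Br+C=0$ have modulus at most one so that the convolution sums are bounded. In the paper one finds $A=3(1-\dt)$, $B=-4$, $C=1$, whose roots are $r_1=\tfrac13(1-\tfrac\dt2+\calO(\dt^2))<\tfrac13$ and $r_2=1+\tfrac{3\dt}{2}+\calO(\dt^2)>1$. The larger root genuinely exceeds one; boundedness of the sums does not come from $|r_2|\le 1$ but from $r_2^n\le r_2^{\calK}\le(1+C\dt)^{T/\dt}\le \exp(CT)$, i.e.\ a Gr\"onwall-type exponential-in-$T$ bound, which is exactly why the final constant depends on $T$ (and $\Reyn$) but not on the discretization parameters. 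With that correction, and with the bookkeeping of the $\trnorm{\cdot}$ and grad-div cross terms carried out (which you defer to the smallness of $\dt$, as does the paper), your argument is the paper's argument.
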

\begin{proof}
First we set $\bv_h = 4\dt \bu_h^{k+1}$ in \eqref{eq:velsl} then  multiply \eqref{eq:pressl}, at time $t=t_k$, by $4\dt \Pi_{Q_h} \DIV \bu_h^{k+1}$. Next, we also set $r_h = 4\tfrac\dt\Reyn q^k$ in \eqref{eq:divsl} and  $z_h = \tfrac{8\dt^2}3( \psi_h^k + \psi_h^{\sharp,k})$ in \eqref{eq:projsl}. \AJS{Add the resulting equations and employ the identity
\[
  2\dt a^{k+1} \bdf{a^{k+1}}{2} = 2\dt \bdf{ |a^{k+1}|^2}{2} + 2 \frakd |\frakd a^{k+1}|^2 + |\frakd^2 a^{k+1}|^2
\]
to obtain}
\begin{multline}
\label{eq:so1sl}
  2\dt \bdf{ \| \bu_h^{k+1} \|^2 + \alpha \| \DIV \bu_h^{k+1} \|^2 }{2} + 2 \frakd \left( \| \frakd \bu_h^{k+1} \|^2 + \alpha \| \DIV \frakd \bu_h^{k+1} \|^2 + \frac{\dt}\Reyn \| q_h^{k+1}\|^2 \right) \\
  + \| \frakd^2 \bu_h^{k+1} \|^2 + \alpha \| \DIV \frakd^2 \bu_h^{k+1} \|^2
  + \AJS{4\dt \calA(\bu_h^{k+1})^2} + \frac{8\dt^2}3 \sbl \frakd \psi_h^{k+1}, \psi_h^k + \psi_h^{\sharp,k} \sbr
  = \AJS{2}\frac\dt\Reyn \| \frakd q_h^{k+1} \|^2.
\end{multline}
We now proceed as follows. First, notice that
\[
  \| \DIV \frakd \bu_h^{k+1} \|^2 = \left\| \DIV \frakd \bu_h^{k+1} + \frac{2\dt}3 \frakd^2 \psi_h^{k+1} \right\|^2 + \frac{4\dt^2}9 \|\frakd^2 \psi_h^{k+1} \|^2
  -\frac{4\dt}3 \scl \frakd^2 \psi_h^{k+1}, \DIV \frakd \bu_h^{k+1} \scr - \frac{8\dt^2}9 \| \frakd^2 \psi_h^{k+1} \|^2.
\]
Apply $\frakd$ to \eqref{eq:projsl} and set $z_h = \frakd^2 \psi_h^{k+1}$ to obtain 
\[
  -\frac{8\dt^2}9 \| \GRAD \frakd^2 \psi_h^{k+1} \|^2 = \frac{4\dt}3 \scl \DIV \frakd \bu_h^{k+1}, \AJS{ \frakd^2 \psi_h^{k+1} } \scr + \frac{8\dt^2}9 \| \frakd^2 \psi_h^{k+1} \|^2.
\]
Therefore we get
\begin{equation}
\label{eq:so2sl}
  \| \DIV \frakd\bu_h^{k+1} \|^2 = \left\| \DIV \frakd \bu_h^{k+1} + \frac{2\dt}3 \frakd^2 \psi_h^{k+1} \right\|^2 + \frac{4\dt^2}9 \trnorm{\frakd^2 \psi_h^{k+1} }^2 + \frac{4\dt^2}9 \|\GRAD \frakd^2 \psi_h^{k+1} \|^2.
\end{equation}
Insert this identity in \eqref{eq:so1sl} to conclude {that}
\begin{multline}
\label{eq:so3sl}
  2\dt \bdf{ \| \bu_h^{k+1} \|^2 + \alpha \| \DIV \bu_h^{k+1} \|^2 }{2} \\
  + 2 \frakd \left( \| \frakd \bu_h^{k+1} \|^2 + \alpha \left\| \DIV \frakd \bu_h^{k+1} + \frac{2\dt}3 \frakd^2 \psi_h^{k+1} \right\|^2
  + \frac{\AJS{4}\alpha\dt^2}9 \|\GRAD \frakd^2 \psi_h^{k+1} \|^2  + \frac{\AJS{4}(\alpha-1)\dt^2}9 \trnorm{\frakd^2 \psi_h^{k+1} }^2
  + \frac{\dt}\Reyn \| q_h^{k+1}\|^2 \right) \\
  + \| \frakd^2 \bu_h^{k+1} \|^2 + \alpha \| \DIV \frakd^2 \bu_h^{k+1} \|^2
  + \AJS{4\dt \calA(\bu_h^{k+1})^2} 
  + \frac{8\dt^2}9 \frakd \trnorm{\frakd^2 \psi_h^{k+1} }^2 + \frac{8\dt^2}3 \sbl \frakd \psi_h^{k+1}, \psi_h^k + \psi_h^{\sharp,k} \sbr
  = \AJS{2}\frac\dt\Reyn \| \frakd q_h^{k+1} \|^2.
\end{multline}
Since, by assumption, $\alpha >1$, it is now necessary to control the last term on the left hand side of \eqref{eq:so3sl}. To do so, we begin by writing
\begin{align*}
  \frac{8\dt^2}3 \sbl \frakd \psi_h^{k+1}, \psi_h^k + \psi_h^{\sharp,k} \sbr &=
  \frac{8\dt^2}3 \left( \sbl \frakd \psi_h^{k+1}, \psi_h^k \sbr \AJS{+} \trnorm{\frakd \psi_h^{k+1}}^2 \AJS{-} \sbl \frakd\psi_h^{k+1}, \frakd^2 \psi_h^{k+1} \sbr 
  + \frac13 \sbl \frakd \psi_h^{k+1}, \frakd^2 \psi_h^{\AJS{k}} \sbr \right)
  \\
  &=  \frac{4\dt^2}3 \left( \frakd \trnorm{ \psi_h^{k+1} }^2 + \trnorm{\frakd \psi_h^k }^2 - \trnorm{ \frakd^2 \psi_h^{k+1} }^2 \right) + \frac{8\dt^2}9 \sbl \frakd \psi_h^{k+1}, \frakd^2 \psi_h^k \sbr.
\end{align*}
Next, we notice that
\begin{multline*}
  \frac{8\dt^2}9 \frakd \trnorm{ \frakd^2 \psi_h^{k+1} }^2 - \frac{4\dt^2}3\trnorm{ \frakd^2 \psi_h^{k+1} }^2 
  + \frac{8\dt^2}9 \sbl \frakd \psi_h^{k+1}, \frakd^2 \psi_h^k \sbr = \\
  \frac{8\dt^2}9 \sbl \frakd \psi_h^k, \frakd^2 \psi_h^k \sbr - \frac{4\dt^2}9 
  \left( \trnorm{ \frakd^2 \psi_h^{k+1} }^2 + 2\trnorm{ \frakd^2 \psi_h^k }^2 - 2 \sbl \frakd^2 \psi_h^{k+1}, \frakd^2 \psi_h^k \sbr \right)
  = \frac{4\dt^2}9 \left( \frakd \trnorm{ \frakd \psi_h^k }^2 - \trnorm{ \frakd^3 \psi_h^{k+1} }^2 \right).
\end{multline*}
In conclusion,
\begin{multline}
\label{eq:so4sl_identity}
  \frac{8\dt^2}9 \frakd \trnorm{\frakd^2 \psi_h^{k+1} }^2 + \frac{8\dt^2}3 \sbl \frakd \psi_h^{k+1}, \psi_h^k + \psi_h^{\sharp,k} \sbr \\ =
  \frac{4\dt^2}3 \left( \frakd \trnorm{ \psi_h^{k+1} }^2 + \trnorm{\frakd \psi_h^k }^2 \right) 
  + \frac{4\dt^2}9 \left( \frakd \trnorm{ \frakd \psi_h^k }^2 - \trnorm{ \frakd^3 \psi_h^{k+1} }^2 \right).
\end{multline}
Substitute \eqref{eq:so4sl_identity} in \eqref{eq:so3sl} to obtain
\begin{multline}
\label{eq:so4sl}
  2\dt \bdf{ \| \bu_h^{k+1} \|^2 + \alpha \| \DIV \bu_h^{k+1} \|^2 }{2} \\
  + 2 \frakd \left( \| \frakd \bu_h^{k+1} \|^2 + \alpha \left\| \DIV \frakd \bu_h^{k+1} + \frac{2\dt}3 \frakd^2 \psi_h^{k+1} \right\|^2
  + \frac{\AJS{4}\alpha\dt^2}9 \|\GRAD \frakd^2 \psi_h^{k+1} \|^2  + \frac{\AJS{4}(\alpha-1)\dt^2}9 \trnorm{\frakd^2 \psi_h^{k+1} }^2 \right. \\
  \left.
  + \frac{\dt}\Reyn \| q_h^{k+1}\|^2 + \frac{2\dt^2}9 \left( 3\trnorm{\psi_h^{k+1}}^2 + \trnorm{\frakd \psi_h^k}^2 \right)
  \right)
  + \frac{4\dt^2}3 \trnorm{\frakd \psi_h^k }^2
  + \| \frakd^2 \bu_h^{k+1} \|^2 + \alpha \| \DIV \frakd^2 \bu_h^{k+1} \|^2 \\
  + \AJS{4\dt \calA(\bu_h^{k+1})^2}
  = \AJS{2}\frac\dt\Reyn \| \frakd q_h^{k+1} \|^2 + \frac{4\dt^2}9 \trnorm{ \frakd^3 \psi_h^{k+1} }^2.
\end{multline}
It remains then to control the terms on the right hand side, which is obtained as in the first order case. Apply $\frakd^2$ to \eqref{eq:projsl} and set $z_h = \frakd^3 \psi_h^{k+1}$ to obtain
\[
  \frac{4\dt^2}9 \trnorm{ \frakd^3 \psi_h^{k+1} }^2 \leq \| \DIV \frakd^2 \bu_h^{k+1} \|^2.
\]
Equation \eqref{eq:divsl} implies
\[
  \| \frakd q_h^{k+1} \|^2 \leq \| \DIV \bu_h^{k+1} \|^{{2}}.
\]
\shlee{Finally, by inserting these bounds in \eqref{eq:so4sl} \shlee{and} using that $\alpha > \max\{1,2/\Reyn\}$,
we can apply the three term recursion inequalities of Lemma~\ref{lem:threeterm}.} \AJS{Indeed, setting
\begin{align*}
  a^k &= \| \bu_h^k \|^2 + \alpha \| \DIV \bu_h^k \|^2, \
  b^k = 4 \dt \calA(\bu_h^k)^2, \\
  d^k &= 2 \left( \| \frakd \bu_h^k \|^2 + \alpha \left\| \DIV \frakd \bu_h^k + \frac{2\dt}3 \frakd^2 \psi_h^k \right\|^2
  + \frac{4\alpha\dt^2}9 \|\GRAD \frakd^2 \psi_h^k \|^2  + \frac{4(\alpha-1)\dt^2}9 \trnorm{\frakd^2 \psi_h^k }^2 \right. \\
 &\left.  \hspace*{2.5in} 
  + \frac{\dt}\Reyn \| q_h^k\|^2 + \frac{2\dt^2}9 \left( 3\trnorm{\psi_h^k}^2 + \trnorm{\frakd \psi_h^{k-1}}^2 \right)
  \right), \\
  A &= 3(1-\dt) >0, \
  B = -4 < 0,  \
  C = 1 >0,
\end{align*}
we observe that our previous discussion implies
\[
  A a^{k+1} B a^k + C a^{k-1} \leq - (b^{k+1}+d^{k+1}-d^k).
\]
In addition, $A+B+C = -3\dt<0$ and, if $\dt$ is small enough, the equation $Ar^2+Br+C=0$ has roots
\begin{align*}
  r_1 &= \frac{2-\sqrt{1-3\dt}}{3(1-\dt)} = \frac13 \left( 1-\frac\dt2 + \calO(\dt^2) \right), \\
  r_2 &= \frac{2+\sqrt{1-3\dt}}{3(1-\dt)} = 1 + \frac{3\dt}2 + \calO(\dt^2).
\end{align*}
Both roots are positive; $r_1 <\tfrac13$ and $r_2$ is larger but close to one. Consequently, Lemma~\ref{lem:threeterm} implies that, for $n\geq2$, we have
\[
  a^n \leq C(a^0+a^1)(r_1^n + r_2^n) - \frac1{3(1-\dt)}\sum_{k=2}^n r_1^{n-k}\sum_{s=2}^k r_2^{k-s}(b^s +d^s-d^{s-1}),
\]
which, since $\dt $ is small can be rewritten as
\[
  a^n + \frac13 \sum_{k=2}^n r_1^{n-k} \sum_{s=2}^k r_2^{k-s} b^s \leq C_1(1+\exp(C_2T))(a^0+a^1) - \frac1{3(1-\dt)} \sum_{k=2}^n r_1^{n-k}\sum_{s=2}^k r_2^{k-s}(d^s-d^{s-1}),
\]
for some constants $C_1$ and $C_2$. To handle the last term we argue as in \cite[Theorem 5.2]{GuermondSalgadoErrorAnalysis}. This implies the result.
}
\end{proof}
Notice that a similar observation to Remark~\ref{rem:newpresup} is also valid here.

\section{A scheme with modification of the boundary correction}
\label{sec:Baensch}

Here we present a gauge Uzawa method for the boundary correction scheme discussed in \cite{MR3253466}. 
We show stability for first and second order variants of this method if the time step and mesh size satisfy a certain condition.
We seek for sequences $\bu_h^\dt \subset \bX_h$, $\psi_h^\dt \subset M_h$, $q_h^\dt, p_h^\dt \subset Q_h$, where $\bu_h^\dt$ and $p_h^\dt$ are used to approximate the velocity and pressure, respectively.
Note that here $\psi_h^\dt \subset M_h$. This is in contrast to what was adopted in Section~\ref{sec:Sanghyun}, \ie $\psi_h^\dt \subset Q_h$.
 After an initialization as in \eqref{eq:init} the scheme proceeds, for $k=0,\ldots,\calK-1$, as follows:
\begin{enumerate}[$\bullet$]
  \item \textbf{Velocity update:} Find $\bu_h^{k+1} \in \bX_h$ that satisfies, for all $\bv_h \in \bX_h$,
  \begin{multline}
  \label{eq:velstep}
    \scl \bdf{\bu_h^{k+1}}{m}, \bv_h \scr + \AJS{\calA \scl \bu_h^{k+1}, \bv_h \scr} - \scl p_h^k, \DIV \bv_h \scr 
    + \scl \GRAD \psi_h^{\sharp,k}, \bv_h \scr = \\
    \scl \bef^{k+1}, \bv_h \scr - \frac\dt{\beta\shlee{\Reyn}}  \scl \partial_\bn \frakd \psi_h^k, \DIVG \bv_h \scr_\Gamma + \scl \bg^{k+1}, \bv_h \scr_\Gamma,
  \end{multline}
  where $\bdf{\cdot}{m}$, \AJS{$\calA\scl\cdot,\cdot\scr$} and $(\cdot)^{\sharp,k}$ where defined in \eqref{eq:defofbdf}, \AJS{\eqref{eq:defofcalA}} and \eqref{eq:defofsharp}, respectively. As before, the parameter $\beta$ is set to $\beta = 1 + \tfrac12(m -1)$. By $\DIVG \bv_h$ we denote the surface divergence of $\bv_h$.
  
  \item \textbf{Projection:} Find $\psi_h^{k+1} \in M_h$ that satisfies
  \begin{equation}
  \label{eq:projstep}
    \scl \GRAD \frakd \psi_h^{k+1}, \GRAD z_h \scr = \frac\beta\dt \scl \bu_h^{k+1}, \GRAD z_h \scr, \quad \forall z_h \in M_h.
  \end{equation}
  
  \item \textbf{Divergence correction:} Find $q_h^{k+1} \in Q_h$ that solves
  \begin{equation}
  \label{eq:divstep}
    \scl \frakd q_h^{k+1}, r_h \scr = - \scl \DIV \bu_h^{k+1}, r_h \scr, \quad \forall r_h \in Q_h.
  \end{equation}
  
  \item \textbf{Pressure update:} The new pressure $p_h^{k+1} \in Q_h$ is obtained by setting
  \begin{equation}
  \label{eq:presup}
    p_h^{k+1} = \psi_h^{k+1} + \frac\kappa\Reyn q_h^{k+1},
  \end{equation}
  where $\kappa$ was defined in \eqref{eq:Korn}.

\end{enumerate}

\subsection{The scheme as a rotational pressure correction method}
\label{sub:ourscheme}

With the result of Proposition~\ref{prop:equiv} at hand it is easy to provide a motivation for scheme \eqref{eq:velstep}--\eqref{eq:presup}. Indeed, if we were able to integrate back by parts the momentum equation, we would obtain
\begin{multline*}
  \scl \bdf{\bu^{k+1}}{m} - \DIV\left( \frac{\AJS{2}}\Reyn \vare(\bu_h^{k+1}) \right) + \GRAD\left( p_h^k + \psi_h^{\sharp,k} \right), \bv_h \scr
  + \scl \frac{\AJS{2}}\Reyn \vare(\bu_h^{k+1} )\SCAL \bn - p_h^k \bn, \bv_h \scr_\Gamma = \\
  \scl \bef^{k+1}, \bv_h \scr + \scl \bg^{k+1} + \frakL_h^{k+1}, \bv_h \scr_\Gamma,
\end{multline*}
where, as in \cite{MR3253466}, we introduced $\frakL_h^\dt$ to be the solution of
\[
  \scl \frakL_h^{k+1}, \bv_h \scr_\Gamma = - \frac\dt{\beta\shlee{\Reyn}} \scl \partial_\bn \frakd \psi_h^k, \DIVG \bv_h \scr_\Gamma.
\]
This would imply that
\[
  \bdf{\bu^{k+1}}{m} - \DIV\left( \frac{\AJS{2}}\Reyn \vare(\bu_h^{k+1}) \right) + \GRAD\left( p_h^k + \psi_h^{\sharp,k} \right) = \bef^{k+1},
\]
and
\[
  \left( \frac{\AJS{2}}\Reyn \vare(\bu_h^{k+1} ) - p_h^k \polI \right)_{|\Gamma} \SCAL \bn = \bg^{k+1} + \frakL_h^{k+1},
\]
as in \cite{MR3253466}. Finally, integrating back by parts \eqref{eq:projstep} would yield
\[
  - \scl \LAP \frakd \psi_h^{k+1}, z_h \scr = -\frac\beta\dt \scl \DIV \bu_h^{k+1}, z_h \scr
\]
or
\[
  \LAP \frakd \psi_h^{k+1} = \frac\beta\dt \DIV \bu_h^{k+1}.
\]
Notice that these coincide with the equations of a stabilized gauge Uzawa scheme. 
Then by using the equivalence given in Proposition~\ref{prop:equiv}, we conclude that \eqref{eq:velstep}--\eqref{eq:presup} is the same as the scheme of \cite{MR3253466}, written in a slightly different form.

\subsection{Stability analysis of the first order scheme}
\label{sec:fo}

Here we present a stability analysis of scheme \eqref{eq:velstep}--\eqref{eq:presup} for $m=1$ (first order variant). As in the case of the grad-div stabilized scheme of Section~\ref{sec:Sanghyun}, our real interest is in $m=2$, but we present this because the arguments are simpler and will allow us to clarify the discussion in the analysis of the second order variant provided below.
To avoid irrelevant technicalities, assume that $\bef \equiv \boldsymbol{0}$ and $\bg \equiv \boldsymbol{0}$.

\begin{thm}[Stability for $m=1$]
\label{thm:stabfo}
Assume that the space $M_h$ verifies \eqref{eq:invineq}. If $m=1$, {$\dt$ is sufficiently small} and the mesh size and time step satisfy
\begin{equation}
\label{eq:CFL}
  \tau \leq C \Reyn h^2
\end{equation}
then the scheme \eqref{eq:velstep}--\eqref{eq:presup} is stable, in the sense that it satisfies
\[
  \| \bu_h^\dt \|_{\ell^\infty(\bL^2)} + \AJS{\| \calA(\bu_h^\dt) \|_{\ell^2(\Real)}} \leq C,
\]
where the constant might depend on $\Reyn$, but is independent of the solution of the scheme or discretization parameters.
\end{thm}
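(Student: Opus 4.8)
The plan is to mimic the energy argument of Theorem~\ref{thm:stabfosl}, adapting it to the new projection step \eqref{eq:projstep} and to the surface term in \eqref{eq:velstep}. First I would test \eqref{eq:velstep} with $\bv_h = 2\dt\bu_h^{k+1}$, substitute the pressure update \eqref{eq:presup} written at time $t_k$ to split $\scl p_h^k,\DIV\bu_h^{k+1}\scr$ into a $\psi_h^k$ and a $q_h^k$ contribution, and set $r_h = 2\tfrac{\kappa\dt}\Reyn q_h^k$ in \eqref{eq:divstep}. Because $\psi_h^\dt \subset M_h \subset \Hunz$, every boundary term produced by integrating $\scl\GRAD\psi_h^{\sharp,k},\bv_h\scr$ and the right-hand side of \eqref{eq:projstep} by parts vanishes; this lets me write $\scl\GRAD\psi_h^{\sharp,k},\bu_h^{k+1}\scr = -\scl\frakd\psi_h^k,\DIV\bu_h^{k+1}\scr$ and recover exactly the coupling $-2\dt\scl\psi_h^k+\frakd\psi_h^k,\DIV\bu_h^{k+1}\scr$ that appears in the proof of Theorem~\ref{thm:stabfosl}. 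Choosing $z_h = \psi_h^k+\frakd\psi_h^k = \psi_h^{k+1}-\frakd^2\psi_h^{k+1}$ in \eqref{eq:projstep} and telescoping with the elementary identity then converts this coupling into $\dt^2\bigl(\frakd\|\GRAD\psi_h^{k+1}\|^2 + \|\GRAD\frakd\psi_h^k\|^2 - \|\GRAD\frakd^2\psi_h^{k+1}\|^2\bigr)$, the analogue of \eqref{eq:fo2sl} with the full norm $\trnorm{\cdot}$ replaced by the seminorm $\|\GRAD\cdot\|$. Collecting everything yields an identity of the type \eqref{eq:fo3sl} but with two essential differences: there is no grad-div term $\alpha\|\DIV\bu_h^{k+1}\|^2$ on the left, and there is an extra boundary contribution $-\tfrac{2\dt^2}\Reyn\scl\partial_\bn\frakd\psi_h^k,\DIVG\bu_h^{k+1}\scr_\Gamma$ on the right.

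Next I would dispose of the two interior terms on the right-hand side as in the first-order grad-div case. Applying $\frakd$ to \eqref{eq:projstep}, testing with $\frakd^2\psi_h^{k+1}$ and using Cauchy--Schwarz gives $\dt^2\|\GRAD\frakd^2\psi_h^{k+1}\|^2 \le \|\frakd\bu_h^{k+1}\|^2$, which is absorbed by the dissipation $\|\frakd\bu_h^{k+1}\|^2$ already present. Testing \eqref{eq:divstep} with $\frakd q_h^{k+1}$ gives $\|\frakd q_h^{k+1}\|^2 \le \|\DIV\bu_h^{k+1}\|^2$. Here the absence of a grad-div term is felt: unlike in Theorem~\ref{thm:stabfosl}, $\|\DIV\bu_h^{k+1}\|^2$ is not controlled for free. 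Instead I would invoke Korn's inequality \eqref{eq:Korn} --- and it is precisely for this that \eqref{eq:presup} carries the factor $\kappa$ rather than $1$ --- to bound $\tfrac{\kappa\dt}\Reyn\|\DIV\bu_h^{k+1}\|^2 \le \dt\calA(\bu_h^{k+1})^2 + \tfrac\dt\Reyn\|\bu_h^{k+1}\|^2$, spending one unit of the dissipation $2\dt\calA(\bu_h^{k+1})^2$ and leaving a harmless Gr\"onwall remainder.

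The main obstacle is the surface term, which I would estimate through the duality pairing between $\bH^{1/2}(\Gamma)$ and its dual: the inverse inequality \eqref{eq:invineq} bounds $\|\partial_\bn\frakd\psi_h^k\|_{1/2,\Gamma} \le Ch^{-1}\|\GRAD\frakd\psi_h^k\|$, while the trace estimate \eqref{eq:trace} bounds the surface divergence by $\|\DIVG\bu_h^{k+1}\|_{-1/2,\Gamma} \le C(\|\bu_h^{k+1}\|^2+\|\GRAD\bu_h^{k+1}\|^2)^{1/2}$. A Young splitting then yields, on one hand, a multiple of $\dt^2 h^{-1}\Reyn^{-1}\|\GRAD\frakd\psi_h^k\|^2$ which, for a suitable Young weight, is absorbed into the left-hand side term $\dt^2\|\GRAD\frakd\psi_h^k\|^2$, and, on the other hand, a velocity contribution whose prefactor --- after a second appeal to Korn's inequality \eqref{eq:Korn} to pass from $\|\bu_h^{k+1}\|^2+\|\GRAD\bu_h^{k+1}\|^2$ to $\calA(\bu_h^{k+1})^2$ --- scales like $\dt^2/(\Reyn h^2)$ times $\calA(\bu_h^{k+1})^2$. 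This last term fits inside the leftover fraction of the dissipation $2\dt\calA(\bu_h^{k+1})^2$ exactly when $\dt/(\Reyn h^2)$ is small, which is the CFL condition \eqref{eq:CFL}. Tuning the Young weight so that these two absorptions, together with the one coming from the $q_h$ term, do not overspend the single dissipation budget is the delicate point of the argument.

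Finally, with all right-hand side terms absorbed, I would be left with an inequality of the form $\frakd a^{k+1} + c\,\dt\,\calA(\bu_h^{k+1})^2 + (\text{nonnegative}) \le C\dt\, a^{k+1}$, where $a^k = \|\bu_h^k\|^2 + \dt^2\|\GRAD\psi_h^k\|^2 + \tfrac{\kappa\dt}\Reyn\|q_h^k\|^2$. Since the Gr\"onwall remainder sits at the top time level $k+1$, I would move it to the left (legitimate because $C\dt<1$ for $\dt$ small), sum over $k$, and apply the discrete Gr\"onwall inequality of Lemma~\ref{lem:gronwall} with $b^k = c\,\calA(\bu_h^k)^2$, $c^\dt=0$ and $B=a^0$. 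This delivers the stated bound on $\|\bu_h^\dt\|_{\ell^\infty(\bL^2)} + \|\calA(\bu_h^\dt)\|_{\ell^2(\Real)}$. I expect the surface-term estimate, together with the verification that \eqref{eq:CFL} is exactly what makes its velocity part fit inside the limited dissipation coming from $\calA(\bu_h^{k+1})^2$, to be the crux; the rest is a careful but routine adaptation of the computation behind Theorem~\ref{thm:stabfosl}.
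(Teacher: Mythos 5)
Your proposal follows essentially the same route as the paper's proof: the same test functions ($\bv_h=2\dt\bu_h^{k+1}$, $r_h=2\tfrac{\kappa\dt}\Reyn q_h^k$, $z_h$ proportional to $\psi_h^k+\frakd\psi_h^k$ and to $\frakd^2\psi_h^{k+1}$), the same use of $\psi_h^\dt\subset M_h\subset\Hunz$ and of Korn's inequality \eqref{eq:Korn} to absorb $\tfrac{\kappa\dt}\Reyn\|\DIV\bu_h^{k+1}\|^2$, the same trace/inverse-inequality treatment of the surface term with \eqref{eq:CFL} entering at exactly the same point (your Young weight is allocated slightly differently, but the estimate is identical in substance), and the same Gr\"onwall conclusion. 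The argument is correct and no further comparison is needed.
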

\begin{proof}
Set $\bv_h = 2\dt \bu_h^{k+1}$ in \eqref{eq:velstep};
multiply \eqref{eq:presup}, at time $t=t_k$, by $-2\dt \Pi_{Q_h} \DIV \bu_h^{k+1}$ and integrate over $\Omega$;
finally, set $r_h = 2 \tfrac{\kappa\dt}\Reyn q_h^k$ in \eqref{eq:divstep}.
Adding the ensuing identities we obtain
\begin{equation}
\label{eq:almostfo}
  \frakd \| \bu_h^{k+\shlee{1}}\|^2 + \| \frakd \bu_h^{k+1} \|^2 + \AJS{2\dt \calA(\bu_h^{k+1})^2} + \frac{\kappa\dt}\Reyn \frakd \| q_h^{k+1} \|^2 +
  2\dt \scl \GRAD( \psi_h^k + \frakd \psi_h^{\AJS{k}} ), \bu_h^{k+1} \scr
  \leq 
  2\dt \scl \frakL_h^{k+1}, \bu_h^{k+1} \scr_\Gamma + \frac{\kappa \dt }\Reyn \|\DIV \bu^{k+1} \|^2,
\end{equation}
where we used that $\psi_h^\dt \in M_h \subset \Hunz$ to infer
\[
  \scl \GRAD \frakd \psi_h^k, \bu_h^{\AJS{k+1}} \scr = - \scl \frakd \psi_h^k, \DIV \bu_h^{k+1} \scr,
\]
{and that, setting $r_h = \frakd q_h^{k+1}$}, in \eqref{eq:divstep} reveals 
\[
  \| \frakd q_h^{k+1} \| \leq \| \DIV \bu_h^{k+1} \|.
\]
Next we set $z_h = 2\dt^2( \psi_h^k + \psi_h^{\sharp,k})$ in \eqref{eq:projstep} to get
\begin{equation}
\label{eq:andonefo}
  \dt^2 \left( \frakd \| \GRAD \psi_h^{k+1} \|^2 + \| \GRAD \frakd \psi_h^{\AJS{k}} \|^2 - \| \GRAD \frakd^2 \psi_h^{k+1}\|^2 \right)
  =
  2\dt \scl\bu_h^{k+1}, \GRAD( \psi_h^k + \frakd \psi_h^k ) \scr.
\end{equation}
Applying $\frakd$ to \eqref{eq:projstep} and setting $z_h = \frakd^2 \psi_h^{k+1}$ yields
\begin{equation}
\label{eq:andtwofo}
  \dt^2 \| \GRAD \frakd^2 \psi_h^{k+1} \|^2 \leq \| \frakd \bu_h^{k+1} \|^2.
\end{equation}
Now we add \eqref{eq:almostfo}, \eqref{eq:andonefo} and \eqref{eq:andtwofo} to derive
\begin{equation}
\label{eq:priortoinvineqs}
  \frakd \| \bu_h^{k+\shlee{1}}\|^2 + \AJS{2\dt \calA(\bu_h^{k+1})^2} + \frac{\kappa\dt}\Reyn \frakd \| q_h^{k+1} \|^2 +
  \dt^2 \left( \frakd \| \GRAD \psi_h^{k+1} \|^2 + \| \GRAD \frakd \psi_h^{\AJS{k}} \|^2 \right)
  \leq 
  2\dt \scl \frakL_h^{k+1}, \bu_h^{k+1} \scr_\Gamma + \frac{\kappa \dt }\Reyn \|\DIV \bu^{k+1} \|^2.
\end{equation}
Notice that {inequality \eqref{eq:Korn} implies}
\[
  \dfrac{\kappa\tau}{\Reyn} \| \DIV \bu_h^{k+1} \|^{{2}} 
  \leq \dfrac{\tau}{\Reyn} \| \bu_h^{k+1} \|^2 + \tau{ \calA(\bu_h^{k+1} )^2}.
\]
Moreover, using \cite[Lemma 3.3]{MR3253466}, we see that
\[
  2\dt \scl \frakL_h^{k+1},\bu_h^{k+1} \scr_\Gamma = 
  \shlee{- \dfrac{2\dt^2}{\beta \Reyn}} \scl \partial_n \frakd \psi_h^k, \DIVG \bu_h^{k+1} \scr_\Gamma
  \leq \shlee{ \dfrac{C \dt^2}{\Reyn}} \| \bu_h^{k+1} \|_{1/2,\Gamma} \| \partial_\bn \frakd \psi_h^k \|_{1/2,\Gamma}.
\]
Using the trace inequality \eqref{eq:trace}, inequality \eqref{eq:Korn} and the inverse estimate \eqref{eq:invineq} 
we continue this bound as follows:
\begin{align*}
  2\dt \scl \frakL_h^{k+1},\bu_h^{k+1} \scr_\Gamma  
  &\leq C \shlee{\dfrac{\dt^2 h^{-1}}{\Reyn}} \|\GRAD \frakd \psi_h^k \| 
    \left( \| \bu_h^{k+1} \|^2 + \AJS{\Reyn \calA(\bu_h^{k+1} )^2} \right)^{1/2} \\
    &\leq \frac\dt{2\Reyn} \left( \| \bu_h^{k+1} \|^2 + \AJS{\Reyn \calA(\bu_h^{k+1} )^2} \right)
    + C \dfrac{\dt^3 }{\shlee{\Reyn h^{2}}} \| \GRAD \frakd \psi_h^k \|^2 \\
    &\leq \frac\dt{2\Reyn} \left( \| \bu_h^{k+1} \|^2 + \AJS{\Reyn \calA(\bu_h^{k+1} )^2} \right)
    + \dt^2 \| \GRAD \frakd \psi_h^k \|^2,
\end{align*}
where, in the last step, we used the mesh condition \eqref{eq:CFL}. Inserting these observations into \eqref{eq:priortoinvineqs} yields the final estimate
\[
  \left(1-\AJS{\frac\dt{2\Reyn}} \right) \| \bu_h^{k+1}\|^2 + \AJS{ \frac\dt2 \calA(\bu_h^{k+1})^2} + \frac{\kappa\dt}\Reyn \frakd \| q_h^{k+1} \|^2 + \dt^2 \frakd \|\GRAD \psi_h^{k+1}\|^2 \leq \| \bu_h^k\|^2.
\]
\AJS{We now proceed as in the proof of Theorem~\ref{thm:stabfosl} and use Lemma~\ref{lem:gronwall}. This concludes the proof}.
\end{proof}

\subsection{Stability analysis of the second order scheme}
\label{sec:so}

Let us now present the stability analysis for the second order scheme ($m = 2$). The proof combines the ideas of Theorem~\ref{thm:stabsosl} and Theorem~\ref{thm:stabfo}. For this reason we keep details to a minimum. The stability is as follows.

\begin{thm}[Stability for $m=2$]
\label{thm:stabso}
Assume that the space $M_h$ satisfies \eqref{eq:invineq}. If $m =2 $, {$\dt$ is sufficiently small} and \eqref{eq:CFL} holds, then the scheme \eqref{eq:velstep}--\eqref{eq:presup} is stable, in the sense that it satisfies
\[
  \| \bu_h^\dt \|_{\ell^\infty(\bL^2)} + \AJS{\| \calA(\bu_h^\dt) \|_{\ell^2(\Real)}} \leq C,
\]
where the constant $C$ might depend on $\Reyn$, but is independent of the solution of the scheme or discretization parameters.
\end{thm}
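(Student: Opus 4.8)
The plan is to transcribe the second-order energy argument of Theorem~\ref{thm:stabsosl} into the present setting, importing the boundary-term estimate of Theorem~\ref{thm:stabfo} to control the contribution of $\frakL_h^{k+1}$ and closing with the three-term recursion of Lemma~\ref{lem:threeterm}. The only structural differences from Theorem~\ref{thm:stabsosl} are that here there is no grad-div stabilization and $\psi_h^\dt\subset M_h\subset\Hunz$; consequently every $\trnorm{\cdot}$ in the $\psi$-bookkeeping is replaced by $\|\GRAD\cdot\|$, and the $\psi$-remainders are absorbed by the raw BDF2 dissipation rather than by a grad-div term.

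First I would test \eqref{eq:velstep} with $\bv_h = 4\dt\bu_h^{k+1}$, multiply \eqref{eq:presup} at time $t=t_k$ by $-4\dt\Pi_{Q_h}\DIV\bu_h^{k+1}$, set $r_h = 4\tfrac{\kappa\dt}\Reyn q_h^k$ in \eqref{eq:divstep}, and set $z_h = \tfrac{8\dt^2}3(\psi_h^k+\psi_h^{\sharp,k})$ in \eqref{eq:projstep}. Since $\psi_h^{\sharp,k}\in\Hunz$ the coupling integrates by parts, $\scl\GRAD\psi_h^{\sharp,k},\bu_h^{k+1}\scr = -\scl\psi_h^{\sharp,k},\DIV\bu_h^{k+1}\scr$; adding the identities and using the BDF2 identity of Theorem~\ref{thm:stabsosl} for $\|\bu_h^{k+1}\|^2$, the choice of $z_h$ (recall $\beta=\tfrac32$) converts the $\scl\DIV\bu_h^{k+1},\psi_h^k+\psi_h^{\sharp,k}\scr$ coupling into the pure $\psi$ cross term $\tfrac{8\dt^2}3\scl\GRAD\frakd\psi_h^{k+1},\GRAD(\psi_h^k+\psi_h^{\sharp,k})\scr$.

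This cross term I would reorganize by the algebraic telescoping identity underlying \eqref{eq:so4sl_identity} (with $\trnorm{\cdot}\to\|\GRAD\cdot\|$), which produces a telescoping part absorbed into the dissipative energy, a non-telescoping dissipation $\tfrac{4\dt^2}3\|\GRAD\frakd\psi_h^k\|^2$ retained on the left, and lower-order remainders in $\|\GRAD\frakd^2\psi_h^{k+1}\|$ and $\|\GRAD\frakd^3\psi_h^{k+1}\|$; differentiating \eqref{eq:projstep} once and twice and testing with $\frakd^2\psi_h^{k+1}$ and $\frakd^3\psi_h^{k+1}$ bounds these by $C\|\frakd\bu_h^{k+1}\|^2$ and $C\|\frakd^2\bu_h^{k+1}\|^2$ as in \eqref{eq:andtwofo}, i.e.\ by the BDF2 dissipation already present. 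The divergence correction is handled via $\|\frakd q_h^{k+1}\|\leq\|\DIV\bu_h^{k+1}\|$ together with \eqref{eq:Korn} and the factor $\kappa$. The new ingredient, $-\tfrac{4\dt^2}{\beta\Reyn}\scl\partial_\bn\frakd\psi_h^k,\DIVG\bu_h^{k+1}\scr_\Gamma$, I would estimate exactly as in Theorem~\ref{thm:stabfo}: \cite[Lemma 3.3]{MR3253466}, the trace inequality \eqref{eq:trace}, \eqref{eq:Korn}, the inverse estimate \eqref{eq:invineq} and the mesh condition \eqref{eq:CFL} bound it by $\tfrac\dt{2\Reyn}(\|\bu_h^{k+1}\|^2+\Reyn\calA(\bu_h^{k+1})^2)+C\dt^2\|\GRAD\frakd\psi_h^k\|^2$.

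The first summand is absorbed by the viscous term $4\dt\calA(\bu_h^{k+1})^2$ and an $\Reyn$-dependent fraction of $\|\bu_h^{k+1}\|^2$, while the second must be absorbed into the retained dissipation $\tfrac{4\dt^2}3\|\GRAD\frakd\psi_h^k\|^2$. I would then cast the estimate in the three-term form $Aa^{k+1}+Ba^k+Ca^{k-1}\leq-(b^{k+1}+d^{k+1}-d^k)$ with $a^k=\|\bu_h^k\|^2$, $b^k=4\dt\calA(\bu_h^k)^2$, $d^k$ the accumulated dissipative and pressure energy, $A=3(1-c\dt)>0$ (with $c=c(\Reyn)$ collecting the $\dt$-losses), $B=-4$, $C=1$, and invoke Lemma~\ref{lem:threeterm} as at the end of Theorem~\ref{thm:stabsosl}, handling the residual sum $\sum_k r_1^{n-k}\sum_s r_2^{k-s}(d^s-d^{s-1})$ as in \cite[Theorem 5.2]{GuermondSalgadoErrorAnalysis}. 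The main obstacle is precisely this last absorption: the constant $C$ multiplying $\dt^2\|\GRAD\frakd\psi_h^k\|^2$, which carries the constants from \eqref{eq:CFL}, \eqref{eq:trace} and \eqref{eq:invineq}, must be dominated by the rigid coefficient $\tfrac43$ of the available dissipation; since that coefficient cannot be increased, the bound can only be closed by taking $\dt$ small and sharpening the constant in \eqref{eq:CFL}, which is exactly what forces both hypotheses into the statement.
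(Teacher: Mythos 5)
Your proposal follows the paper's proof essentially verbatim: the same test functions ($4\dt\bu_h^{k+1}$, $-4\dt\Pi_{Q_h}\DIV\bu_h^{k+1}$, $4\tfrac{\kappa\dt}{\Reyn}q_h^k$, $\tfrac{8\dt^2}{3}(\psi_h^k+\psi_h^{\sharp,k})$, and the increments $\frakd^2\psi_h^{k+1}$, $\frakd^3\psi_h^{k+1}$ of the differentiated projection step), the same replacement of $\trnorm{\cdot}$ by $\|\GRAD\cdot\|$, the same treatment of $\frakL_h^{k+1}$ via \cite[Lemma 3.3]{MR3253466}, \eqref{eq:trace}, \eqref{eq:Korn}, \eqref{eq:invineq} and \eqref{eq:CFL}, and the same conclusion via Lemma~\ref{lem:threeterm}. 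Your closing remark about absorbing $C\dt^2\|\GRAD\frakd\psi_h^k\|^2$ into the retained $\tfrac{4\dt^2}{3}\|\GRAD\frakd\psi_h^k\|^2$ correctly identifies where \eqref{eq:CFL} and the smallness of $\dt$ enter, exactly as in the paper's (terser) argument.
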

\begin{proof}
Set $\bv_h = 4\dt \bu_h^{k+1}$ in \eqref{eq:velstep};
multiply \eqref{eq:presup},  at time $t=t_k$, by $-4\dt \Pi_{Q_h} \DIV \bu_h^{k+1}$ and integrate over $\Omega$; set $r_h = 4\tfrac{\kappa\dt}\Reyn q_h^k$ in \eqref{eq:divstep} and $z_h = \tfrac{8\dt^2}3 \left( \psi_h^k -\psi_h^{\sharp,k} \right)$ in \eqref{eq:projstep}; apply the operator $\frakd$ to \eqref{eq:projstep} and set $z_h = \frakd^2 \psi_h^{k+1}$. Adding the ensuing identities yields, after tedious calculations which nevertheless closely follow the arguments of the proof of Theorem~\ref{thm:stabsosl}, that
\begin{multline}
\label{eq:ppso}
  2\dt \bdf{ \| \bu_h^{k+1} \|^2 }{2}
  +2 \frakd \left( \left\| \frakd \bu_h^{k+1} - \frac{2\dt}3 \GRAD \frakd^2 \psi_h^{k+1} \right\|^2 
  + \frac{\kappa\dt}\Reyn \| q_h^{k+1} \|^2 + \frac{2\dt^2}3 \| \GRAD \psi_h^{k+1} \|^2 + \frac{4\dt^2}9 \| \GRAD \frakd \psi_h^k \|^2
  \right) \\
  + \| \frakd^2 \bu_h^{k+1}\|^2 + \frac{4\dt^2}3 \| \GRAD \frakd \psi_h^k \|^2 
  + \AJS{ 4\dt \calA(\bu_h^{k+1})^2} 
  \leq
  \frac{2\kappa\dt}\Reyn \| \DIV \bu_h^{k+1} \|^2 + 4\dt \scl \frakL_h^{k+1},\bu_h^{k+1} \scr_\Gamma + \frac{4\dt^2}9 \| \GRAD \frakd^3 \psi_h^{k+1} \|^2.
\end{multline}

Next, we apply $\frakd^2$ to \eqref{eq:projstep} and set $z_h = \frakd^3 \psi_h^{k+1}$ to get
\begin{equation}
\label{eq:ppso_2}
  \frac{4\dt^2}9 \| \GRAD \frakd^3 \psi_h^{k+1} \|^2 \leq \| \frakd^2 \bu_h^{k+1} \|^2.
\end{equation}

Adding \eqref{eq:ppso_2} and \eqref{eq:ppso} yields a suitable bound for the last term on the right hand side. The remaining terms can be handled as in the proof of Theorem~\ref{thm:stabfo}. \AJS{To conclude we follow the proof of Theorem~\ref{thm:stabsosl} and apply the three term recursion inequalities of Lemma~\ref{lem:threeterm}}.
\end{proof}

\begin{rem}[Other schemes]
The technique used in \cite[\S~4]{MR3327983} to show unconditional stability of their scheme is very similar to the ones we have discussed here. Thus, one can combine our ideas with their techniques to obtain unconditionally stable higher order schemes for the modified boundary condtions of \cite{MR3327983}. To avoid repetition, we skip these details.
\end{rem}

\section{Numerical illustrations}
\label{sec:numerics}
Let us, in this last section, evaluate the performance of the numerical schemes for traction boundary conditions discussed in previous sections. In \S\ref{subsec:grad_div}, we explore computationally the rate of convergence for the grad-div stabilized scheme presented and analyzed in Section~\ref{sec:Sanghyun} for $m=2$ (second order scheme). For the boundary correction scheme of Section~\ref{sec:Baensch}, similar computations are carried out in \S\ref{subsec:bd_correction}. 

All examples are computed with the help of the open-source finite element library \texttt{deal.II} \cite{dealII83}. 
In particular, the implementation is an extension of the framework used in \cite{MR3337644}.
We use the lowest order Taylor-Hood elements over quadrilateral meshes, that is $\polQ_2/\polQ_1$ finite elements. In all the experiments, the arising linear systems are solved using the generalized
minimal residual method (GMRES) solver with an AMG preconditioner.

For all convergence tests we set $\Omega = (0,1)^2$
and choose the right hand sides $\bef$ and $\bg$ so that the exact solution to \eqref{eq:NSE}--\eqref{eq:BC} is
\[
  \ue(t,x,y) := \left( 
        \begin{array}{c}
        \sin (t+x) \sin (t+y) \\
        \cos (t+x) \cos (t+y)
        \end{array}
  \right), 
  \qquad
  \pe(t,x,y) :=  \sin (t+x-y).
\]

\subsection{Grad-div stabilization}
\label{subsec:grad_div}

\begin{figure}[!h]
\centering
\begin{subfigure}[b]{0.46\textwidth}
    \includegraphics[width=\textwidth]{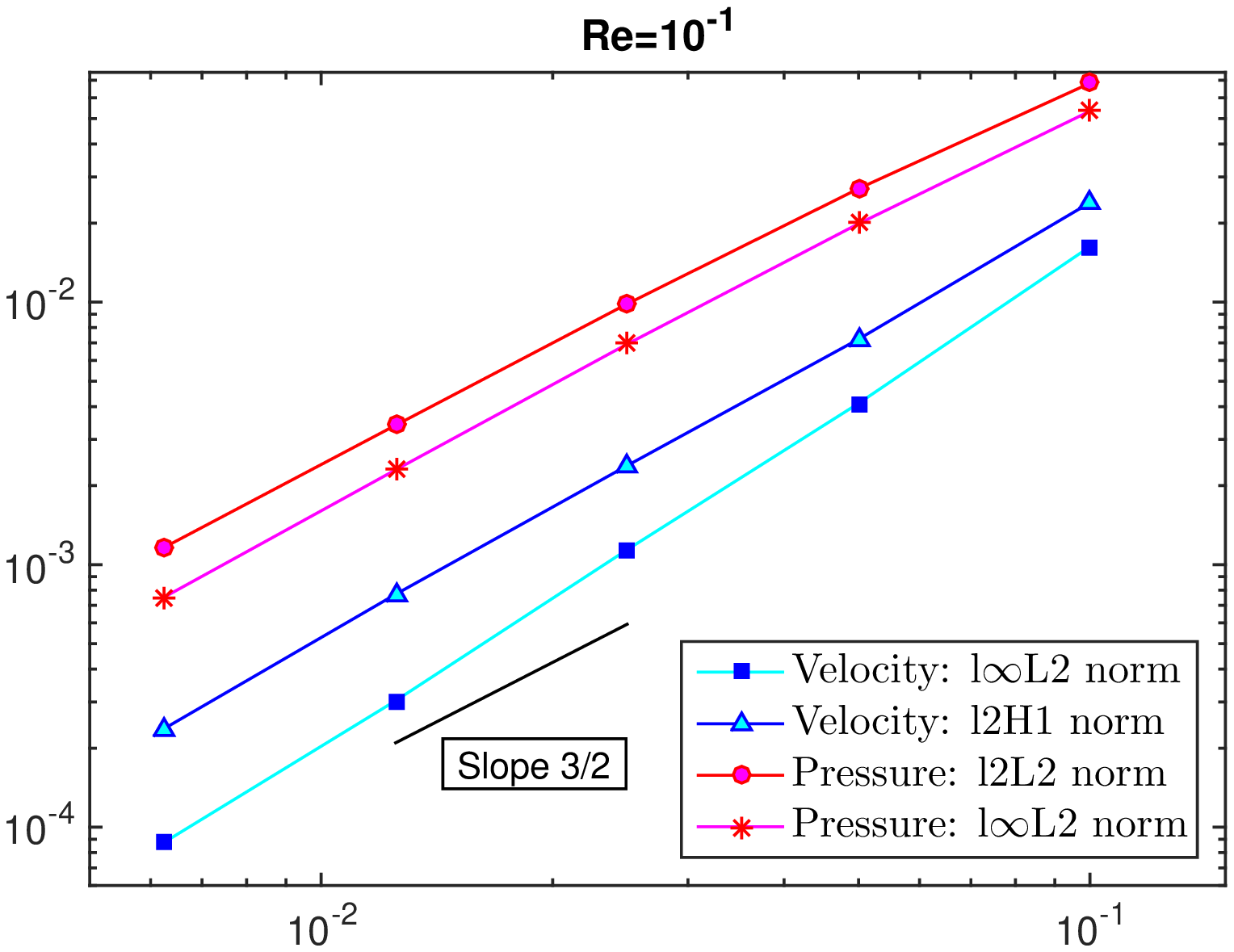}
\caption{$\Reyn=10^{-1}$}
\end{subfigure}
\begin{subfigure}[b]{0.46\textwidth}
    \includegraphics[width=\textwidth]{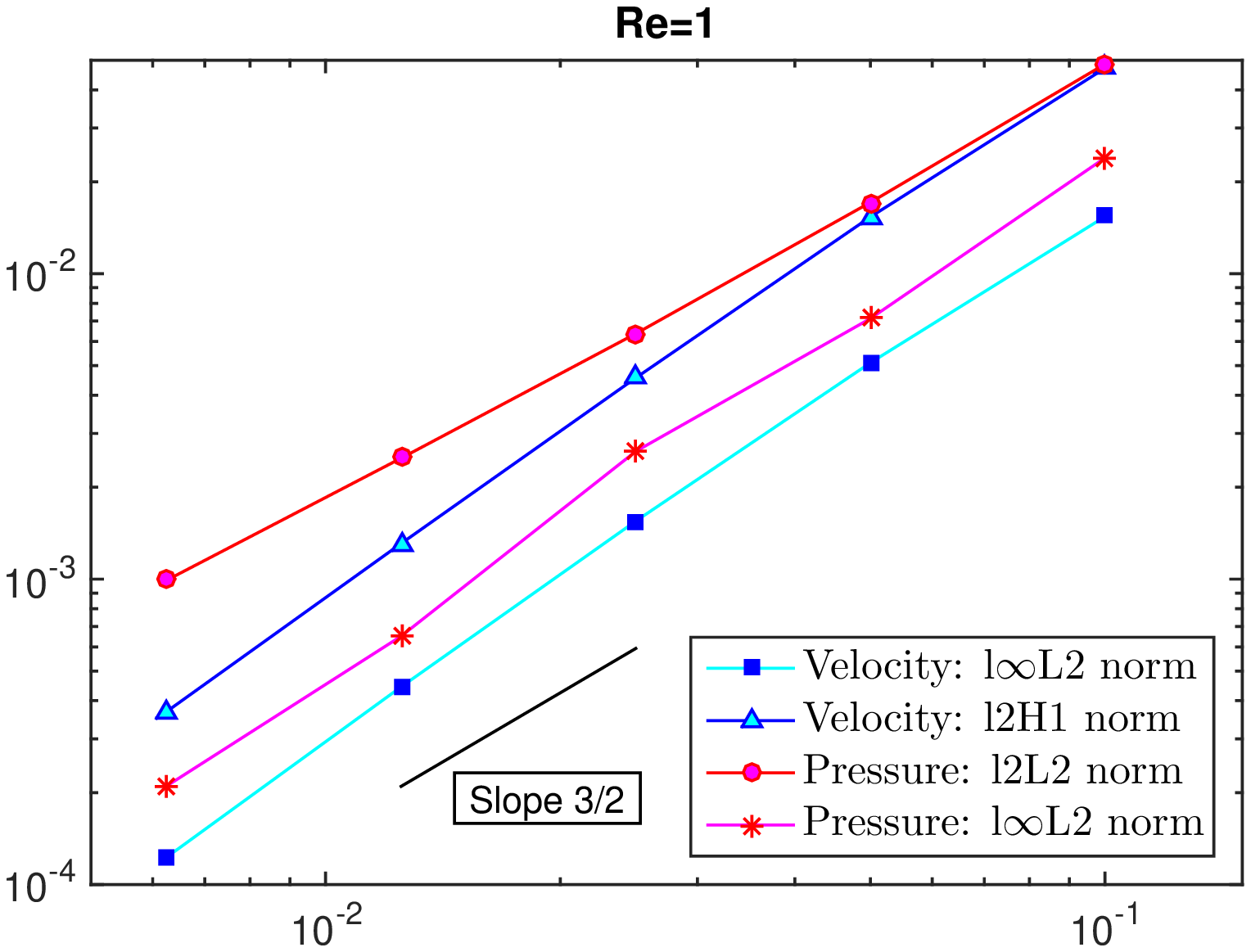}
\caption{$\Reyn=1$}
\end{subfigure}
\begin{subfigure}[b]{0.46\textwidth}
    \includegraphics[width=\textwidth]{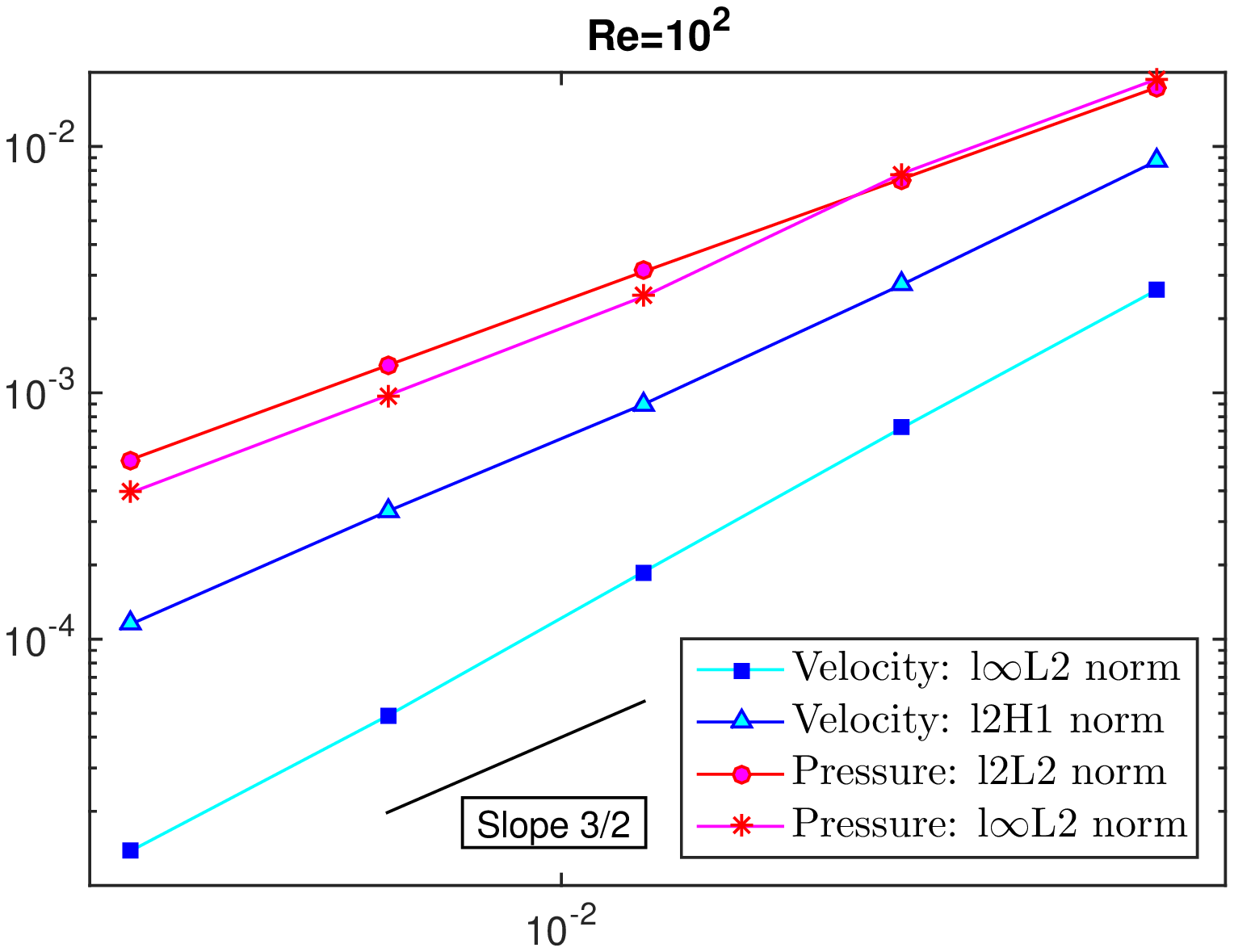}
\caption{$\Reyn=10^2$}
\end{subfigure}
\begin{subfigure}[b]{0.46\textwidth}
    \includegraphics[width=\textwidth]{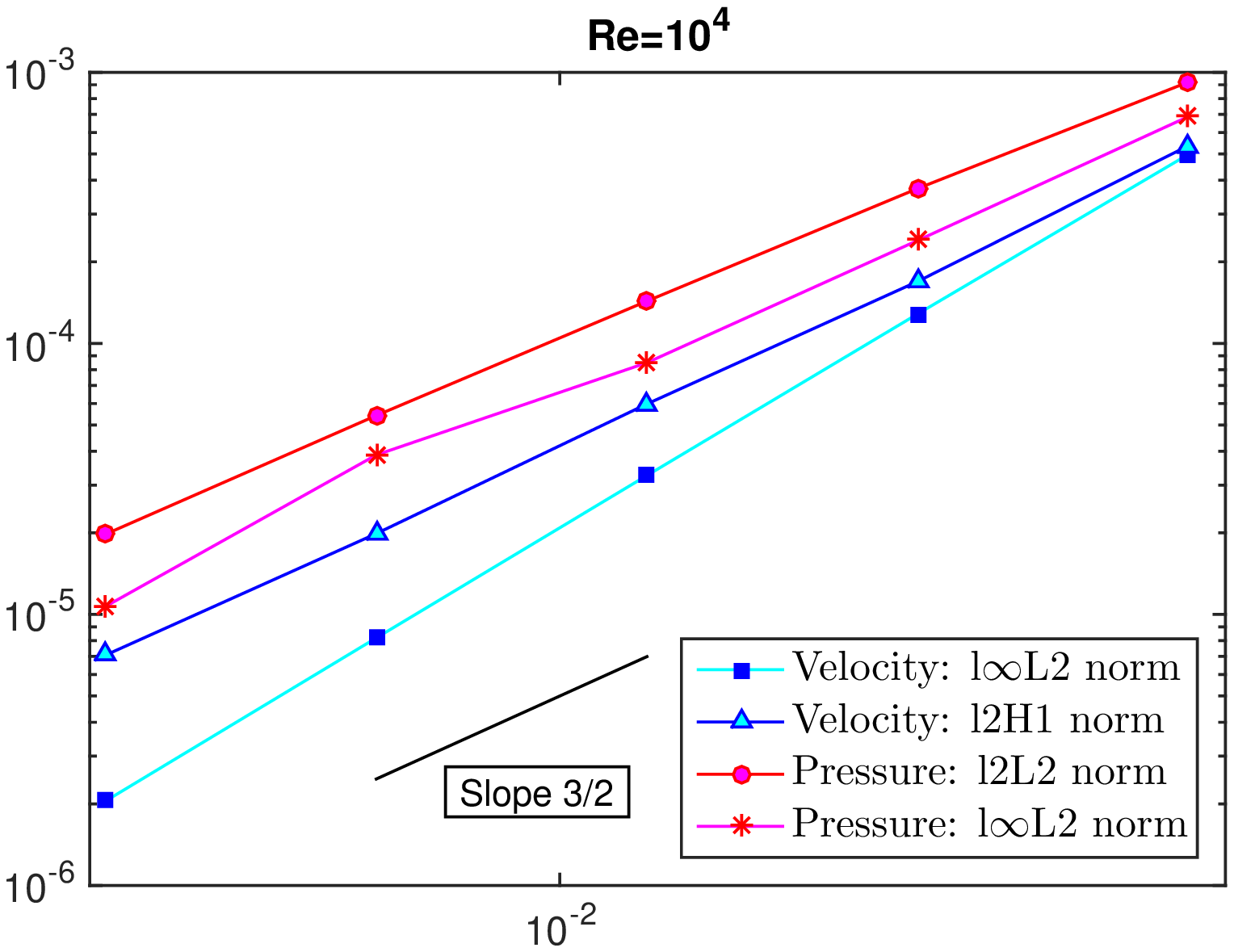}
\caption{$\Reyn=10^4$}
\end{subfigure}
  \caption{Decay of different error norms versus time step $\dt$ for the rotational pressure correction projection method with grad-div stabilization of Section~\ref{sec:Sanghyun} with different $\Reyn$ numbers. The order of convergence $\calO(\dt^{3/2})$ is observed as expected for all cases.} 
\label{fig:grad_div} 
\end{figure}

Here we explore the scheme \eqref{eq:init}--\eqref{eq:pressl} of Section~\ref{sec:Sanghyun}. Computational results for a first order discretization ($m=1$) and comparisons with suboptimal classical standard pressure correction schemes have been provided in \cite{SLee2013}. Therefore, here we focus on the second order ($m=2$) case. For all values of the discretization parameters we set $\alpha = 1$. The behavior of the errors in the velocity and pressure approximations versus the time step $\dt$ are depicted in Figure~\ref{fig:grad_div} \shlee{with different 
$\Reyn \in \{10^{-1},10^0,10^2,10^4 \}$  numbers.} 
The space discretization is chosen fine enough ($h=0.015625$) so that it does not pollute the time discretization error. 
However, note that larger mesh size {($h=0.0625$)} is chosen for the case  $\Reyn = 10^4$ to be efficient in computational time for solver.
We observe a rate of convergence of $\calO(\dt^{3/2})$ for the velocity in the $\ell^2(\bH^1(\Omega))$ and $\ell^{\infty}(\bL^2(\Omega))$ norms and the pressure in the  $\ell^2(L^2(\Omega))$ and $\ell^{\infty}(L^2(\Omega))$ norms for all cases.
Notice that this is to be expected, since this is the provable rate of convergence for the rotational scheme even in the case of no-slip boundary conditions \cite{Gue.S02_b}.


\subsection{Boundary correction scheme}
\label{subsec:bd_correction}

\begin{figure}[!h]
\centering
\begin{subfigure}[b]{0.46\textwidth}
    \includegraphics[width=\textwidth]{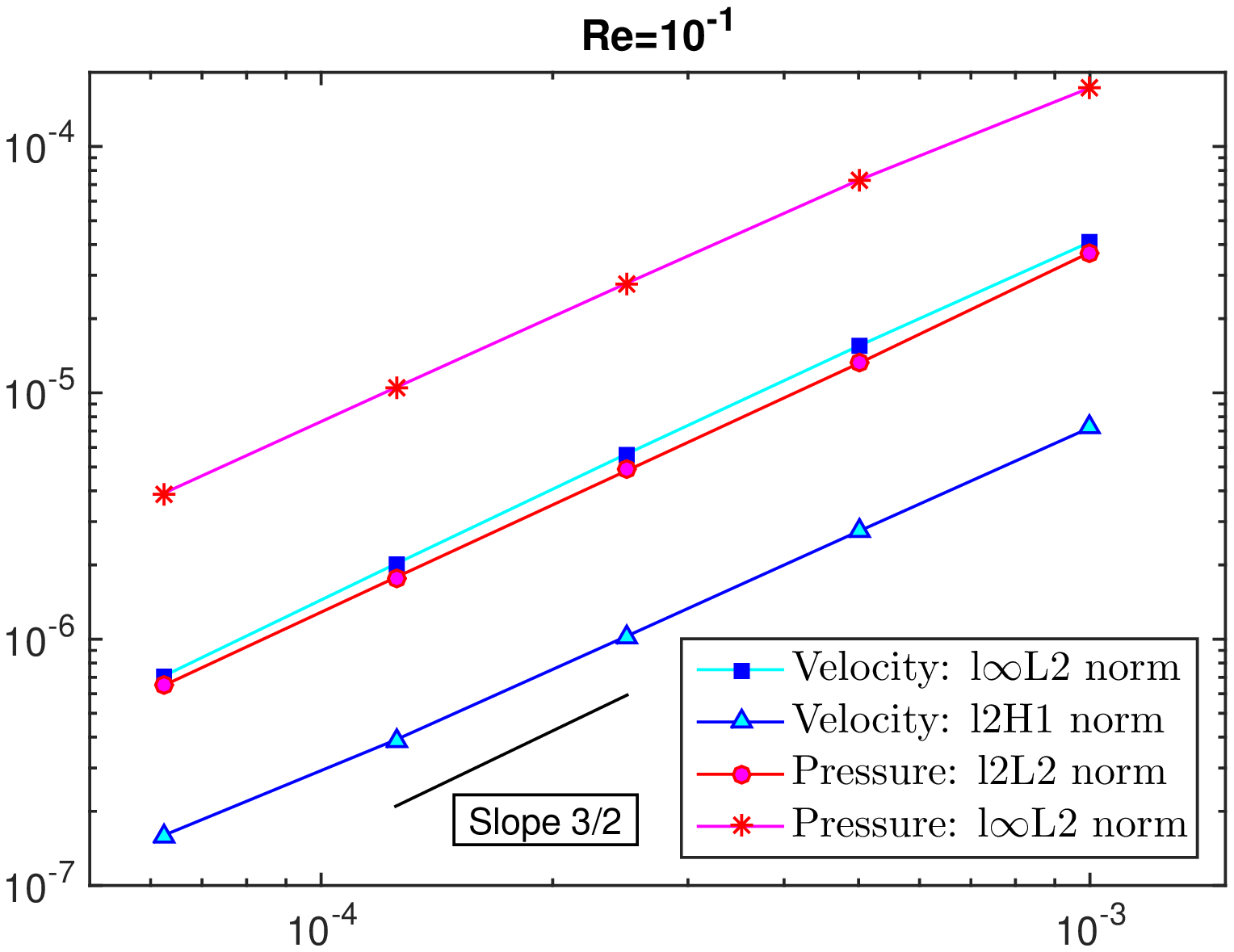}
\caption{$\Reyn=10^{-1}$}
\end{subfigure}
\begin{subfigure}[b]{0.46\textwidth}
    \includegraphics[width=\textwidth]{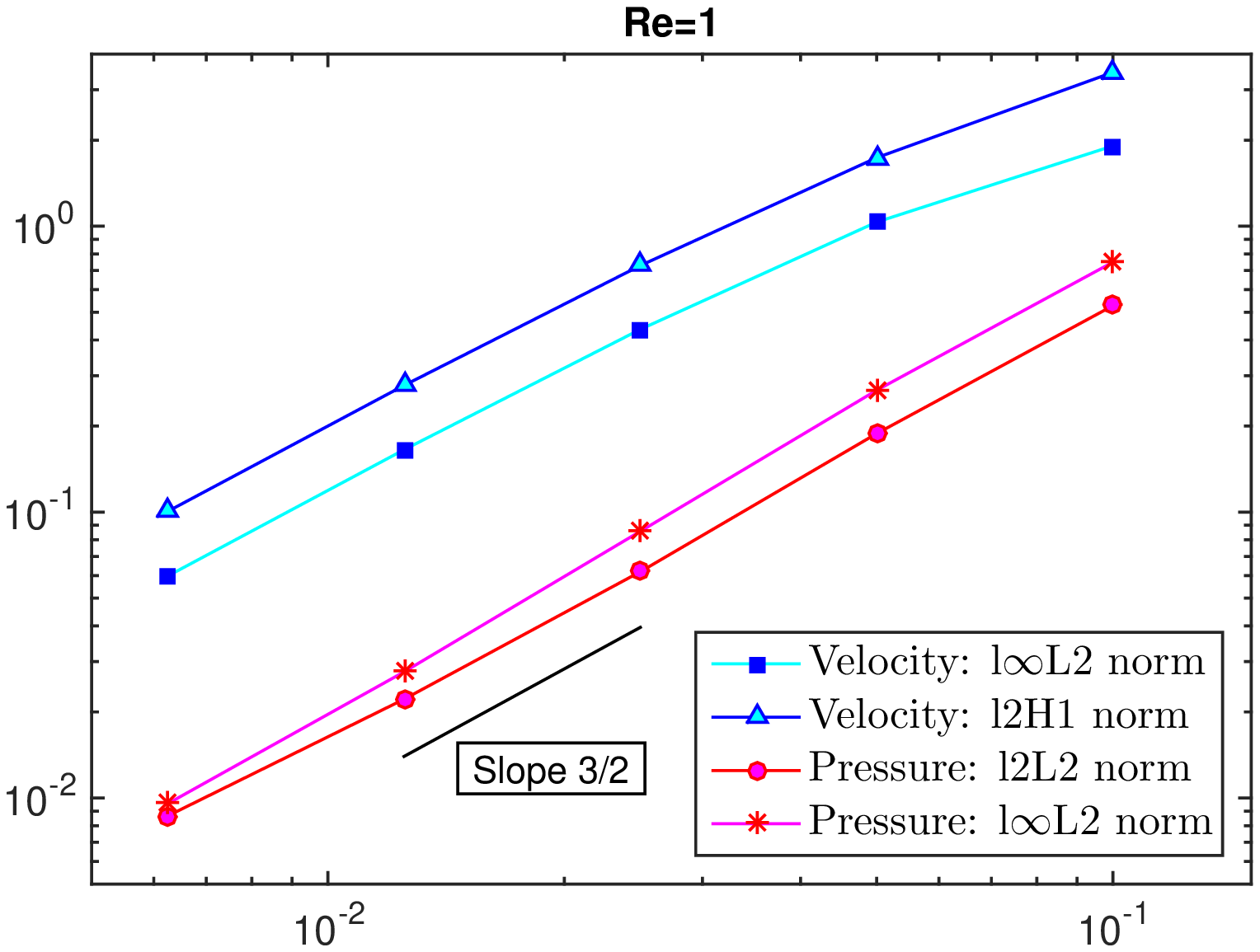}
\caption{$\Reyn=1$}
\end{subfigure} 
\begin{subfigure}[b]{0.46\textwidth}
    \includegraphics[width=\textwidth]{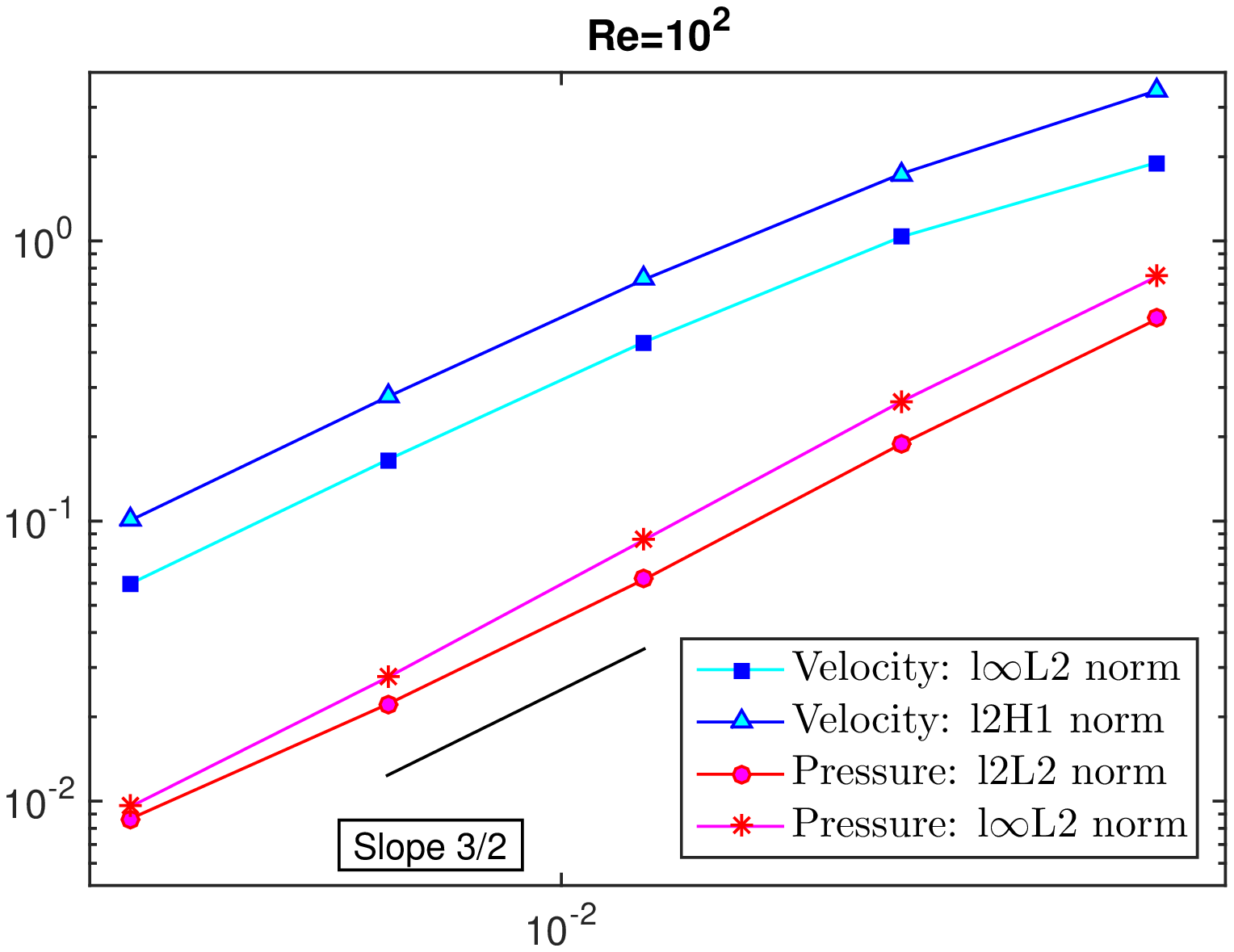}
\caption{$\Reyn=10^2$}
\end{subfigure}
\begin{subfigure}[b]{0.46\textwidth}
    \includegraphics[width=\textwidth]{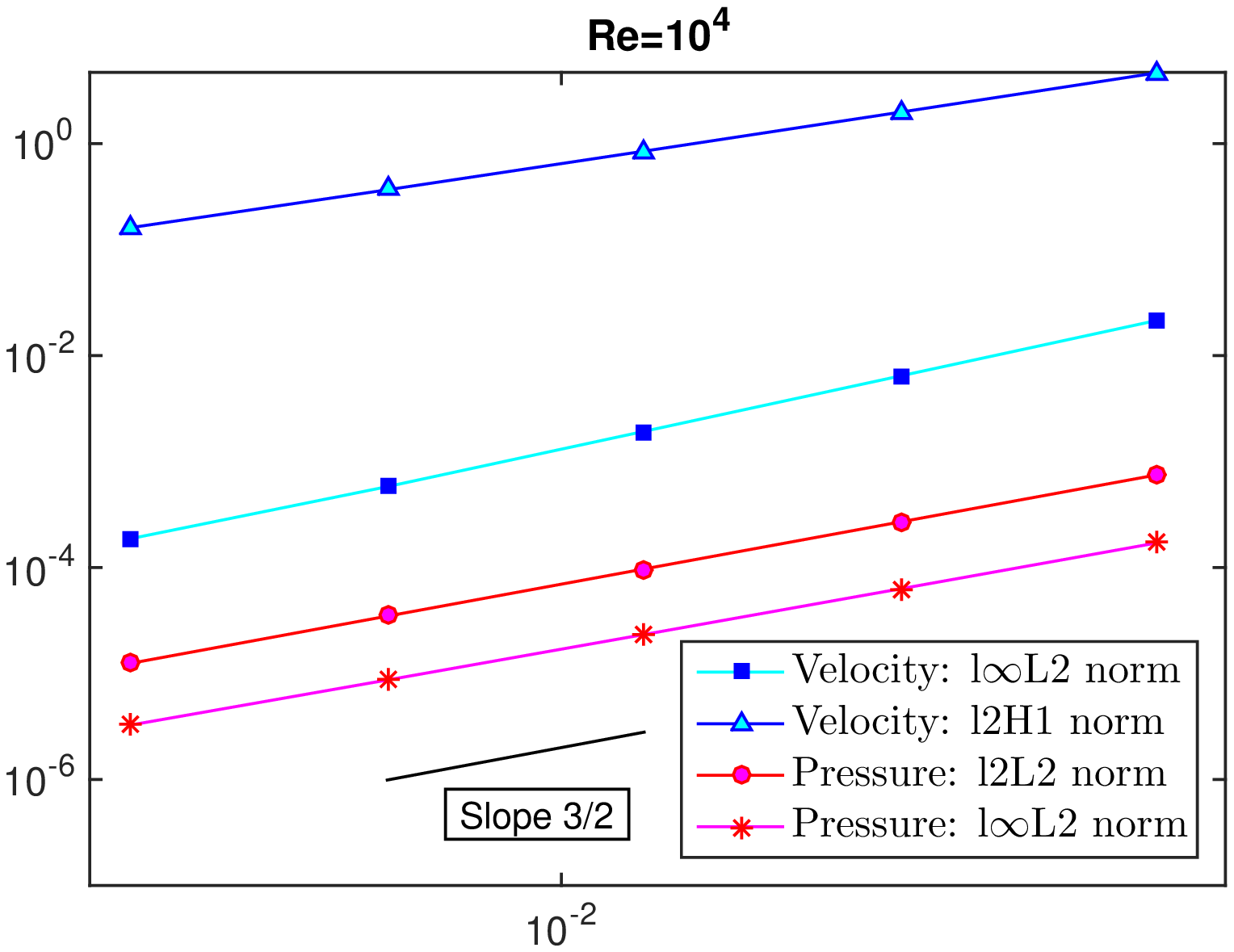}
\caption{$\Reyn=10^4$}
\end{subfigure}
\caption{Decay of different error norms versus time step $\dt$ for the rotational pressure correction projection method with boundary correction of Section~\ref{sec:Baensch} \shlee{with different $\Reyn$ numbers}. An order of convergence $\calO(\dt^{3/2})$ is observed. \shlee{Note that we employ a smaller mesh size for case a) $\Reyn=0.1$ due to the mesh condition \eqref{eq:CFL}. } } 
\label{fig:bd_correc} 
\end{figure}

To test the scheme with boundary correction \eqref{eq:velstep}--\eqref{eq:presup} of Section~\ref{sec:Baensch} we keep the setup of \S\ref{subsec:grad_div}. The results are shown in Figure~\ref{fig:bd_correc} \shlee{with different 
$\Reyn \in \{10^{-1},10^0,10^2,10^4 \}$  numbers.} 
The space discretization is chosen fine enough ($h=0.015625$) so that it does not pollute the time discretization error. 
We observe a rate of convergence of $\calO(\dt^{3/2})$ for the velocity in the $\ell^2(\bH^1(\Omega))$ and $\ell^{\infty}(\bL^2(\Omega))$ norms and the pressure in the $\ell^2(L^2(\Omega))$ and $\ell^{\infty}(L^2(\Omega))$ norms for all cases. 
From these observations we can obtain several conclusions: The first, and obvious one, is that indeed the boundary correction provides an improvement in accuracy over the standard rotational scheme.
{\color{blue} Secondly, to obtain stability and expected convergence rate, we observe that, while \eqref{eq:CFL} might not be sharp, 
the time step $\dt$ is related to the Reynolds number $\Reyn$ and the mesh size $h$ especially when $\Reyn < 1$. This can be illustrated by the fact that, in the case of $\Reyn=10^{-1}$ a smaller time step and mesh size ($h=0.0078125$) were required to obtain the expected convergence rates.}

One final observation is in convergence order. 
Namely, even though the scheme with grad-div stabilization couples the components of the velocity and thus complicates the linear algebra, a comparison of Figures \ref{fig:grad_div} (b) and \ref{fig:bd_correc} (b) with $\Reyn=1$ reveals that the magnitude of the errors for both velocity and pressure is smaller for a same given mesh size and a time step. Therefore, we observe that grad-div stabilization scheme seems to be more accurate.

Finally, we must remark that the computations of \cite{MR3253466} show a rate of convergence of $\calO(\dt^2)$. However, we believe that this is due to the fact that the author there considers a smooth domain and that, for general domains, our results are sharp.

\subsection{Flow around a cylinder}
\label{sub:cylinder}
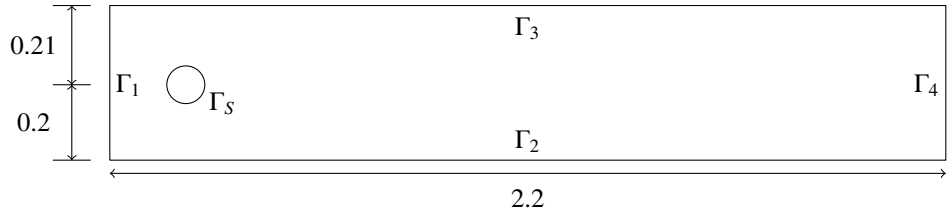
\begin{figure}[!h]
\centering
\begin{tikzpicture}[scale = 5]
  \draw (0.2,0.2) circle (0.05);
  \draw (0,0) -- (2.2,0) -- (2.2,0.41) -- (0,0.41) -- (0,0);
  
  \draw (-0.15,0) -- (-0.05,0);
  \draw (-0.15,0.2) -- (-0.05,0.2);
  \draw (-0.15,0.41) -- (-0.05,0.41);
  
   \draw[<->] (-0.1,0) -- (-0.1,0.2);
   \draw[<->] (-0.1,0.2) -- (-0.1,0.41);
   
   \node[] (a) at (-0.2,0.1) {$0.2$};
   \node[] (b) at (-0.2,0.305) {$0.21$};

   \node[] (c) at (0.05,0.2) {$\Gamma_1$};
   \node[] (c) at (0.3,0.15) {$\Gamma_S$};   

   \draw[<->] (0.,-0.035) -- (2.2,-0.035);
   \node[] (a) at (1.1,-0.1) {$2.2$};
   
   \node[] (a) at (1.1,0.05) {$\Gamma_2$};
   \node[] (a) at (1.1,0.35)  {$\Gamma_3$};  
   \node[] (a) at (2.15,0.2)  {$\Gamma_4$};   
\end{tikzpicture}
\caption{Geometry for the test case of \S\ref{sub:cylinder} with the notation for different pieces of the boundary. Homogeneous no-slip boundary conditions are imposed on $\Gamma_S \cup \Gamma_2 \cup \Gamma_3$, outflow boundary conditions are imposed on $\Gamma_4$, \ie \eqref{eq:BC4}. A parabolic inflow, as in \eqref{eq:Couette}, is prescribed on $\Gamma_1$.}
\label{fig:setup}
\end{figure} 
\begin{figure}[!h]
  \begin{center}
   \includegraphics[scale=0.25]{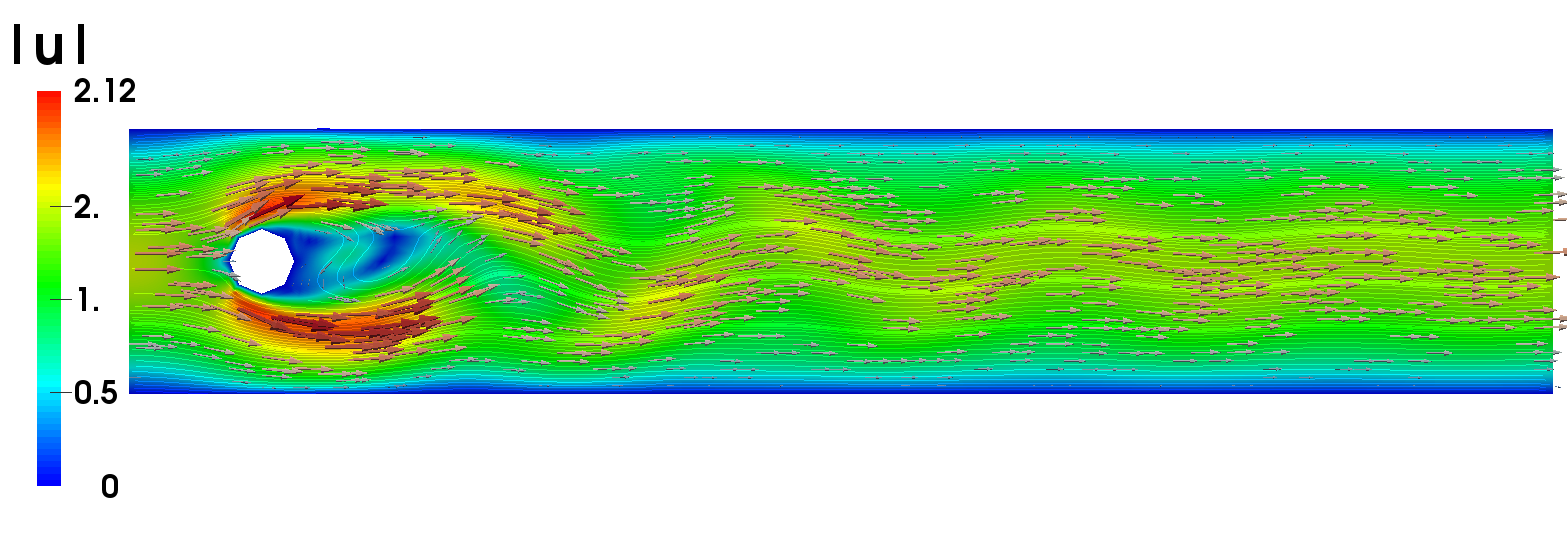}
   \includegraphics[scale=0.25]{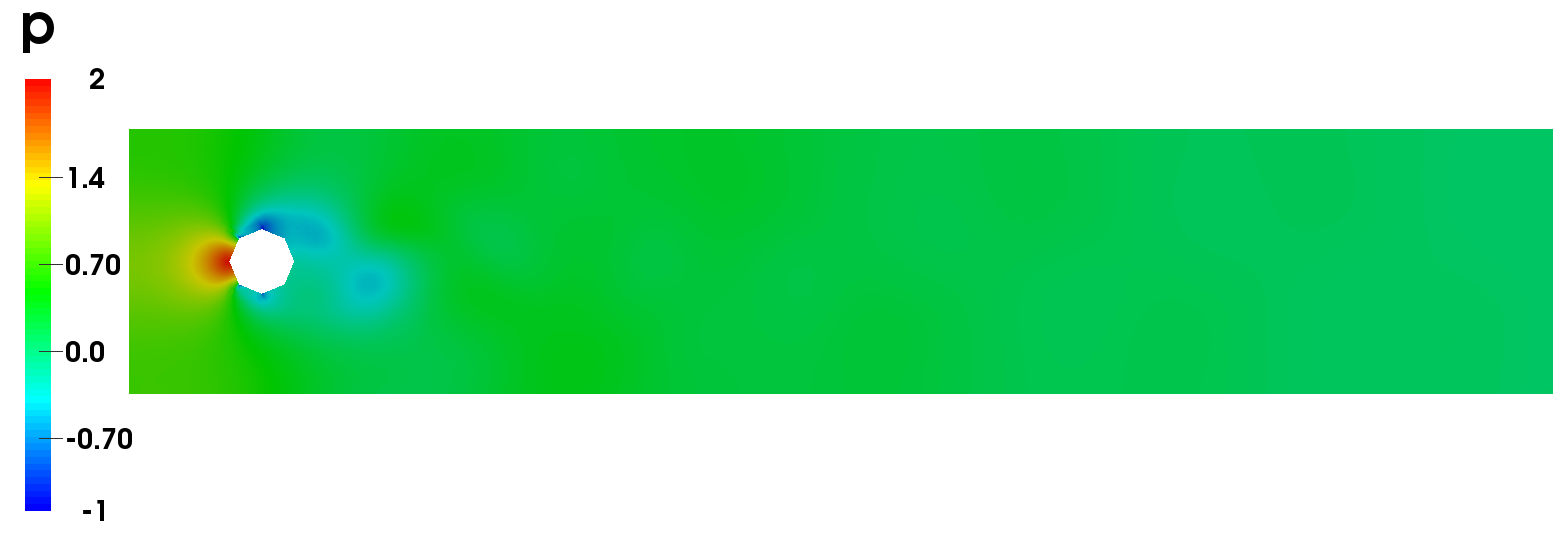}
  \end{center}
  \caption{Velocity magnitude and pressure values at $t=30$ for the example of \S\ref{sub:cylinder}.} 
\label{fig:result} 
\end{figure}

Let us conclude with a more realistic example. In the domain $\Omega = (0,2.2) \times (0,0.41) \backslash B_r(0.2.0.2)$, we compute 
a 2D laminar flow around a cylinder 
which is introduced as a benchmark problem in \cite{schafer1996benchmark}. 
Here  $B_r:= \{ \bx \in \Real^2 : \ |\bx - \bc| \leq r \}$ is the circle with the center $\bc = (0.2,0.2)^\intercal$ and radius $r=0.05$.
The detailed geometry and the notation we adopt for pieces of the boundary are depicted in Figure~\ref{fig:setup}. 
We set homogeneous no-slip boundary conditions on $\Gamma_S \cup \Gamma_2 \cup \Gamma_3$, \ie
\[
  \ue_{|\Gamma_S \cup \Gamma_2 \cup \Gamma_3} = \boldsymbol{0}.
\]
On the outflow boundary $\Gamma_4$, a homogeneous \AJS{open boundary} condition is given:
\begin{equation}
\label{eq:BC4}
  \left( \frac1\Reyn \AJS{\GRAD \ue} - \pe \polI \right)_{|\Gamma_4} \SCAL \bn = \boldsymbol{0}.
\end{equation}
Finally, on $\Gamma_1$, a parabolic inflow is prescribed
\begin{equation}
\label{eq:Couette}
  \ue(t,0,y) = \left( \frac{4y(0.41 - y)\textsf{U}}{0.41^2}, 0 \right)^\intercal,
\end{equation}
where the velocity magnitude is $\textsf{U} =1.5$. The fluid begins at rest and we set the Reynolds number  to $\Reyn = 1000$. To ensure numerical stability we add a first order numerical viscosity to the momentum equation of the form $\DIV ( \nu \GRAD \ue^{k+1}) $ where, in every cell, the artificial viscosity is defined by $\nu := \frac12 c h | \ue^k |$, with $h$ being the cell size. In all tests we set $c = 0.2$.

For space discretization we employ the lowest order Taylor-Hood elements $\polQ_2/\polQ_1$ with $33,536$ and $4,288$ degrees of freedom for velocity and pressure, respectively. The minimum mesh size in the domain is $h=0.125$ The time step is chosen as $\tau = 0.005$.

Figure~\ref{fig:result} depicts the magnitude of the velocity $| \ue |$ and the pressure values at time $t=30$ by using the boundary correction method of Section~\ref{sec:Baensch}. The grad-div stabilization method of Section~\ref{sec:Sanghyun} yields similar results, albeit at a higher computational cost due to the complexity of the underlying linear systems, see \cite{SLee2013}. We note that the obtained results seem to coincide with those shown in \cite{schafer1996benchmark}. 

\section{Conclusion}
We present stability analyses for the two different rotational pressure correction fractional time stepping schemes supplemented with open and traction boundary conditions. The grad-div stabilized scheme is unconditionally stable and our results can open the door for producing an error analysis for it. For the results of boundary correction scheme in Section~\ref{sec:Baensch}, we observe a major drawback. Namely, we require the rather stringent condition \eqref{eq:CFL}. Computations presented in \cite{MR3253466} and in this work indicate that this is not sharp. How to circumvent this is currently under investigation.

\section*{Acknowledgments}
The work of S. Lee is supported by the Center for Subsurface Modeling,  Institute for Computational Engineering and Sciences at UT Austin. The work of A.J.~Salgado is supported in part by the National Science Foundation grant DMS-1418784.

\bibliographystyle{plain} 
\bibliography{biblio}

\end{document}